\documentclass[11 pt, a4paper]{amsart}
\usepackage{amssymb,amsmath,epsfig,mathrsfs, enumerate}
\usepackage{graphicx}
\usepackage[normalem]{ulem}
\usepackage{fancyhdr}
\pagestyle{fancy}
\fancyheadoffset{0cm}
\fancyhead[RO,LE]{\small\thepage}
\fancyhead[LO]{\small \emph{\nouppercase{\rightmark}}}
\fancyhead[RE]{\small \emph{\nouppercase{\rightmark}}}
\fancyfoot[L,R,C]{}

\usepackage[margin=3cm]{geometry}
\usepackage{hyperref}
\hypersetup{
 colorlinks   = true,
 urlcolor     = blue,
 linkcolor    = blue,
 citecolor   = red ,
 bookmarksopen=true
}

%
\newtheorem{theorem}{Theorem}[section]
\theoremstyle{plain}

\newtheorem{corollary}[theorem]{Corollary}

\newtheorem{definition}[theorem]{Definition}
\newtheorem{exmp}{Example}[section]

\newtheorem{lemma}[theorem]{Lemma}

\newtheorem{proposition}[theorem]{Proposition}
\newtheorem{remark}[theorem]{Remark}

\numberwithin{equation}{section}
\newcommand{\R}{\mathbb{R}}
\newcommand{\pa}{\partial}
\newcommand{\Om}{\Omega}

\newcommand{\Hn}{\mathbb{H}^{n}}
\newcommand{\lm}{\lambda}

\newcommand{\g}{\mathfrak{g}}
\newcommand{\Tau}{\Gamma}
\newcommand{\G}{\mathbb G}

\newcommand{\ve}{\varepsilon}
\newcommand{\cH}{\mathscr{H}}
\newcommand{\vf}{\varphi}
\newcommand{\Ri}{\mathscr R}
\newcommand{\V}{\mathscr V}
\newcommand{\s}{\sigma}
\newcommand{\U}{\mathscr U}

\setlength{\topmargin}{-0.2in}
\setlength{\oddsidemargin}{0.3in}
\setlength{\evensidemargin}{0.3in}
\setlength{\textwidth}{6.3in}
\setlength{\rightmargin}{0.7in}
\setlength{\leftmargin}{-0.5in}
\setlength{\textheight}{9.1in}

\title[ $\Gamma^{1,\alpha}$ boundary Schauder estimates, etc. ]{Compactness methods for $\Gamma^{1,\alpha}$ boundary Schauder estimates in Carnot groups}

\author{Agnid Banerjee}
\address{TIFR CAM, Bangalore-560065} \email[Agnid Banerjee]{agnidban@gmail.com}
\author{Nicola Garofalo}
\address{Dipartimento di Ingegneria Civile, Edile e Ambientale (DICEA) \\ Universit\`a di Padova\\ 35131 Padova, ITALY}
\email[Nicola Garofalo]{rembdrandt54@gmail.com}

\thanks{The second author was supported in part by a Progetto SID (Investimento Strategico di Dipartimento) ``Non-local operators in geometry and in free boundary problems, and their connection with the applied sciences", University of Padova, 2017.}

\author{Isidro H. Munive}
\address{CIMAT, Mexico}\email[Isidro Munive]{imunive@cimat.mx}

\begin{document}
\maketitle

\tableofcontents

\begin{abstract}
The aim of this paper is to prove  $\Tau^{1,\alpha}$ Schauder estimates near a $C^{1,\alpha}$ non-characteristic portion of the boundary for $\Tau^{0, \alpha}$ perturbations of horizontal Laplaceans in Carnot groups. This situation of minimally smooth domains presents itself naturally in the study of subelliptic free boundary problems of obstacle type, see \cite{DGP}.  
\end{abstract}

\section{Introduction}
The fundamental role of Schauder estimates (both interior and at the boundary) in the theory of elliptic and parabolic partial differential equations is well-known. In this paper we are interested in $\Gamma^{1,\alpha}$ Schauder estimates at the boundary in the Dirichlet problem for a class of second order partial differential equations in Carnot groups. The central position of such Lie groups in the analysis of the hypoelliptic operators introduced by H\"ormander in his famous paper \cite{H} was established in the 1976 work of Rothschild and Stein on the so-called \emph{lifting theorem}, see \cite{RS}. Such result represented the culmination of a visionary program laid by Stein in his address at the 1970 International Congress of Mathematicians in Nice \cite{Snice}.

To provide some perspective on the results in the present paper we mention that in  his 1981 works \cite{Je1, Je2} D. Jerison first analyzed the question of Schauder estimates at the boundary for the horizontal Laplacean in the Heisenberg group $\Hn$ (see also \cite{Je3} for a further extension to CR manifolds). Long known to physicists as the Weyl's group, $\Hn$ is an important model of a (non-Abelian) Carnot group of step $r=2$ (see Definition \ref{D:CG} below for the general notion). Jerison divided his analysis in two parts, according to whether or not the relevant portion of the boundary contains so-called \emph{characteristic points}, a notion that goes back to the pioneering works of Fichera \cite{Fi1, Fi2} (see Definition \ref{D:char} below). At such points the vector fields that form the relevant differential operator become tangent to the boundary  and thus one should expect a sudden loss of differentiability,  somewhat akin to what happens in the classical setting with oblique derivative problems. In fact, Jerison proved that there exist no Schauder boundary estimates at characteristic points! He did so by constructing a domain in $\Hn$ with real-analytic boundary that support solutions of the horizontal Laplacean $\Delta_\cH u = 0$ which vanish near a characteristic boundary point, and which near such point possess no better regularity than H\"older's. For a detailed analysis of these aspects in connection with the subelliptic Dirichlet problem we refer the reader to the papers \cite{CG, CGN}. Other relevant works are \cite{LU, CGNajm, GV}. 

On the positive side, Jerison proved in \cite{Je1} that at a non-characteristic portion of the boundary it is possible to develop a Schauder theory based on the non-isotropic Folland-Stein H\"older classes $\Gamma^{k,\alpha}$ (see Section \ref{ss:FS} below). He achieved this by adapting to $\Hn$ tools from Fourier and microlocal analysis. However, if we leave the prototype Heisenberg group $\Hn$ and we move to a general Carnot group there exists no known counterpart of the results from \cite{Je1}. 

The focus of the present paper is on Schauder estimates at a non-characteristic portion of the boundary in a Carnot group $\G$ of arbitrary step $r\ge 1$. One of the main motivations for our work comes from the analysis of some free boundary problems of obstacle type with non-holonomic constraints. In Theorem II in the paper \cite{DGP} it was proved that, in a Carnot group of step $r = 2$, under a suitable thickness condition the free boundary of a solution to the obstacle problem 
\[
\Delta_\cH u = \chi_{\{u>0\}}
\]
is locally a non-characteristic $C^{1,\alpha}$ hypersurface (for the definition of a horizontal Laplacean $\Delta_\cH$, see Definition \ref{D:sl} below). This result leads us to introduce the main contribution of this paper.   

For $k\in \mathbb N\cup\{0\}$ and $0<\alpha< 1$ we indicate with $\Gamma^{k,\alpha}$ the Folland-Stein non-isotropic H\"older classes, see Definition \ref{hf} below. We suppose that $\mathbb{A}= [a_{ij}]$ be a given $m\times m$ symmetric matrix-valued function with real coefficients and satisfying the following ellipticity condition for some $\lambda > 0$, 
\begin{equation}\label{ea0}
 \lambda \mathbb{I}_m \leq \mathbb{A}(p) \leq \lambda^{-1} \mathbb{I}_m,\ \ \ \ \ \ \ \ \ p\in \G,
 \end{equation}
where $\mathbb{I}_m$ denotes the $m \times m$ identity matrix.
If $\Om\subset \G$ is a given bounded open set, then given a point $p_0\in \pa \Om$, for any $s>0$ we set for simplicity
\begin{equation}\label{VS}
\V_s = \Om\cap B(p_0,s),\ \ \ \ \ \ \mathscr S_s = \pa\Om\cap B(p_0,s),
\end{equation}
where $B(p_0,s)$ is as in \eqref{pseudo} below.
 We consider the boundary value problem
\begin{equation}\label{dp0}
\sum_{i,j=1}^m X_i^\star (a_{ij}X_j u)  = \sum_{i=1}^m X_i^\star f_i + g\ \ \ \text{in}\ \V_s,\ \ \ \ \ u  = \phi\ \ \ \text{on}\ \mathscr S_s.
\end{equation}
The following is our main result.

\begin{theorem}\label{main}
Suppose that \eqref{ea0} hold and that for some $\alpha \in (0,1)$ the domain $\Om \subset \G$ be of class $C^{1,\alpha}$. Assume that for a given $p_0 \in \pa \Om$ and for $0<s\le 1$ the set $\mathscr S_s$ be non-characteristic. Let $u\in \mathscr{L}^{1,2}_{loc}(\V_s) \cap C(\overline{\V_s})$ be a weak solution to \eqref{dp0}, with $a_{ij}, f_i, g$ and $\phi$ satisfying the hypothesis 
\begin{equation}\label{assump1}
a_{ij} \in \Tau^{0, \alpha}(\overline{\V_s}),\ \ f_i \in \Tau^{0,\alpha}(\overline{\V_s}),\ \ g \in L^{\infty}(\overline{\V_s}),\ \ \phi \in \Tau^{1,\alpha}(\overline{\V_s}).
\end{equation} 
Then, $u\in \Tau^{1,\alpha}(\overline{\V_{s/2}})$, and we have the a priori estimate
\begin{equation}\label{ap}
[\nabla_\cH u]_{\Tau^{0,\alpha}(\V_{s/2})}  \leq \frac{C}{s^{1+\alpha}} \bigg[||u||_{L^{\infty}(\V_s)} + s^{1+\alpha} [f]_{\Tau^{0,\alpha}(\V_s)} + s^2 ||g||_{L^{\infty}(\V_s)}\bigg],
\end{equation}
where $[\nabla_\cH u]_{\Tau^{0,\alpha}(\V_{s/2})}$ is the H\"older seminorm of the horizontal gradient of $u$, see \eqref{hg} and \eqref{semi} below, and $C = C(\alpha, \G,\lambda,[a_{ij}]_{\Tau^{0,\alpha}(\V_1)}, \Om)>0$.
\end{theorem}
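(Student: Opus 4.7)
The plan is to prove Theorem \ref{main} by a compactness/approximation scheme in the spirit of Caffarelli, adapted to the anisotropic geometry of $\G$ via the group dilations $\delta_r$ and left translations. The first move is to flatten and normalize. Since $\mathscr S_s$ is $C^{1,\alpha}$ and non-characteristic at $p_0$, I would choose a defining function $\rho$ for $\pa\Om$ with $|\nabla_\cH \rho(p_0)|\ne 0$; after a horizontal rotation one may assume $X_1\rho(p_0)\ne 0$, and a graph change of variables flattens $\pa\Om$ near $p_0$ while preserving the $\Tau^{0,\alpha}$ regularity of the coefficients and $\Tau^{1,\alpha}$ regularity of the boundary datum. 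The $C^{1,\alpha}$ smoothness of $\pa\Om$ is precisely what is needed here to keep the transformed $a_{ij}$ in the right class. The equation \eqref{dp0} is pushed forward to an equation of the same divergence form on $B(0,1)\cap\{\xi_1>0\}$. Rescaling by $\delta_r$ and subtracting $\phi$ reduces the theorem to proving a first-order Taylor expansion at the origin for a normalized solution with zero boundary data on the flat non-characteristic face.

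The core of the argument consists of two ingredients. The first is a $\Tau^{1,\alpha}$ boundary estimate for the constant-coefficient operator $\De^0 = \sum a_{ij}(p_0) X_i X_j$ on the model $B(0,1)\cap\{\xi_1>0\}$ with vanishing Dirichlet data on the flat non-characteristic face: for every $L^\infty$-normalized solution $v$ there is a horizontal affine polynomial $L$ with $\|L\|_{\Tau^{1,\alpha}}\le C$ and
\[
\|v - L\|_{L^\infty(\V_r)}\le C r^{1+\alpha},\qquad 0<r<\tfrac12.
\]
This can be derived from the Folland-Stein interior Schauder theory together with a reflection/barrier argument exploiting the transversality of $X_1$ to $\{\xi_1=0\}$. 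The second ingredient is the compactness/approximation lemma: for every $\ve>0$ there exists $\delta>0$ such that, for any weak solution $u$ of \eqref{dp0} in $\V_1$ with $\|u\|_{L^\infty(\V_1)}\le 1$ and
\[
\operatorname{osc}_{\V_1} a_{ij} + [f]_{\Tau^{0,\alpha}(\V_1)} + \|g\|_{L^\infty(\V_1)} + [\phi]_{\Tau^{1,\alpha}(\mathscr S_1)}\le\delta,
\]
there is a frozen-coefficient solution $v$ on $\V_{1/2}$ with vanishing boundary data and $\|u-v\|_{L^\infty(\V_{1/2})}\le\ve$. This is proved by contradiction: a sequence of putative counterexamples is precompact in $C(\overline{\V_{1/2}})$ thanks to the De Giorgi-Nash-Moser type interior H\"older estimates combined with the qualitative $C^0$ boundary regularity at non-characteristic points available from \cite{CG,CGN,DGP}, and a Caccioppoli/energy argument identifies the limit as the required frozen solution vanishing on the flat face.

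Combining the two ingredients in a dyadic iteration at the scales $r_k = \mu^k$, for $\mu \in (0,1)$ and $\ve,\delta$ chosen small enough depending on $\alpha,\lambda$ and $\G$, produces a sequence of horizontal affine polynomials $L_k$ with $\|u - L_k\|_{L^\infty(\V_{r_k})}\le C r_k^{1+\alpha}$ and $|L_{k+1}-L_k|\le C r_k^{1+\alpha}$. After each rescaling by $\delta_{r_k}$, the smallness hypothesis required to re-apply the compactness lemma is automatic, since $[a_{ij}]_{\Tau^{0,\alpha}}r_k^\alpha$ and $[f]_{\Tau^{0,\alpha}}r_k^\alpha$ tend to zero. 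The telescoping series yields a pointwise first-order horizontal expansion of $u$ at $p_0$. Repeating at every point of $\mathscr S_{s/2}$ and combining with the Folland-Stein interior Schauder estimate, one converts these pointwise expansions into the seminorm bound \eqref{ap} via the Campanato-type characterization of $\Tau^{1,\alpha}$. The hardest step will be the compactness lemma: unlike in Jerison's work in $\Hn$, Fourier-analytic tools are unavailable in a general Carnot group, so one must extract precompactness of the approximating sequence purely from the qualitative $C^0$-behavior at non-characteristic points and then pass to the limit in the weak formulation using the intrinsic energy estimates.
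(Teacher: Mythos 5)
Your overall strategy -- normalize, prove an approximation (compactness) lemma against a frozen-coefficient model, iterate dyadically along the group dilations, and convert the pointwise first-order expansions into the $\Tau^{1,\alpha}$ seminorm via a Campanato-type argument -- matches the paper's Steps~1--4. The compactness lemma as you phrase it (contradiction, precompactness from the qualitative $C^{0,\beta}$ boundary estimates at non-characteristic points, Caccioppoli and weak limits identifying the frozen-coefficient half-space problem) is essentially Lemma~\ref{compactness}, and the model regularity you want is essentially Theorem~\ref{KNc}, proved there with barriers plus the Kohn--Nirenberg/Derridj boundary regularity.

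The genuine gap is the very first move: the boundary flattening. You claim that a graph change of variables ``flattens $\pa\Om$ near $p_0$ while preserving the $\Tau^{0,\alpha}$ regularity of the coefficients'' and that the equation ``is pushed forward to an equation of the same divergence form.'' This fails in a general Carnot group. If $\Phi(x',x_m,y)=(x',x_m-h(x',y),y)$, the pushforwards $\Phi_*X_j$ are no longer the canonical left-invariant horizontal fields: you get $\Phi_*X_j = X_j - (X_jh)\,\partial_{x_m} + (\text{lower-order corrections in }h)$, so the transformed principal part mixes in nonhorizontal directions and the operator is no longer of the form $\sum X_i^\star(\tilde a_{ij} X_j\,\cdot)$. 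Worse, $\Phi_*X_j$ are not homogeneous of degree one with respect to $\delta_\lambda$, so the rescaling and Folland--Stein $\Tau^{k,\alpha}$ machinery you invoke later (interior Schauder, Campanato characterization) no longer applies to the flattened equation. This is precisely why the paper emphasizes that, ``differently from the classical approach to boundary Schauder estimates, we do not flatten the boundary.'' The Euclidean intuition -- that any $C^{1,\alpha}$ coordinate change keeps the equation uniformly elliptic with $C^{0,\alpha}$ coefficients -- breaks because the horizontal structure and the dilation homogeneity are not diffeomorphism-invariant.

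The paper's way around this is coordinate-free: it works directly with the $C^{1,\alpha}$ graph domain $\Om_\sigma=\delta_{\sigma^{-1}}(\Om)$ and uses that, under group dilations, the rescaled boundaries $\{x_m=h_\sigma(x',y)\}$ converge to the flat vertical half-space $\{x_m=0\}$ (this is where $C^{1,\alpha}$ and non-characteristic enter). The flattening map $\Phi_k$ appears in Lemma~\ref{compactness} only to \emph{extend} the solutions $u_k$ to a common domain $B(4/5)$ by reflection so that Ascoli--Arzel\`a can be applied -- the equation is never pushed forward through $\Phi_k$, and the limit equation is identified on the true half-space $\{x_m>0\}$ by a weak-convergence argument, not by change of variables. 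If you want to repair your proposal, the fix is to drop the flattening of the equation entirely and argue on the original domain, using the dilations $\delta_{\rho^k}$ to straighten the boundary asymptotically and the extension-by-reflection only as a device for compactness.

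A smaller remark: you also need the companion steps showing that the boundary horizontal gradients $\nabla_\cH P_{\bar p}$ are H\"older across distinct boundary points (the paper's Step~3, via comparison at a non-tangential interior point), and the interior/boundary matching (Step~4, splitting cases according to $d(p,p')$ versus $\delta(p)$). Your last paragraph gestures at this but does not spell out how the pointwise expansions at boundary points combine with interior Schauder into a two-point estimate; that matching is not automatic and is where the non-tangential ball construction is needed.
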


Our approach to Theorem \ref{main} is based on an adaptation of compactness arguments that have their roots in the seminal paper of Caffarelli \cite{Ca}. For such arguments the homogeneous  structure of a Carnot group based on the presence of a family of non-isotropic dilations plays a critical role. We mention that related ideas have been previously employed in the subelliptic setting for interior Schauder estimates, see e.g. \cite{CH, GL, XZ}. With respect to such works, our situation is complicated by the presence of the Dirichlet condition, whose handling  has required some delicate arguments. Also, differently from the classical approach to boundary Schauder estimates, we do not flatten the boundary. The successful implementation of a coordinate-free approach to the non-Euclidean setting of this paper, in which there is lack of ellipticity at every point, provides an alternative more geometric viewpoint on Schauder estimates at the boundary which, we believe, is robust enough to be applicable to higher-order Schauder estimates. This is an aspect to which we plan to come back in a future study. 

In connection with our work we mention the recent paper \cite{BCC}, in which the authors have established an interesting $\Tau^{2, \alpha}$ boundary Schauder estimate at the non-characteristic boundary for $C^\infty$ domains in Carnot groups. We emphasize that our Theorem \ref{main} is substantially different in nature from their main result, and cannot be deduced from it. The reason for this is twofold: 1) in \cite{BCC} the authors work with smooth domains. We work with $C^{1,\alpha}$ domains. Since, as we have said, our main motivation is the obstacle problem, this distinction is essential. In a free boundary problem one does not a priori know the higher regularity of the free boundary, and consequently the results in \cite{BCC} cannot be applied in such situations;  2) in \cite{BCC} the authors work with non-divergence form equations and $\Gamma^\alpha$ coefficients. Instead, we work with divergence form equations with $\Gamma^\alpha$ coefficients. If one writes our equations in non-divergence form, one obtains first-order terms with singular coefficients which cannot be handled by the methods in [BCC].

To provide the reader with an additional perspective on the present work we mention that, in the classical obstacle problem, the a priori knowledge of the   $C^{1,\alpha}$ boundary smoothness of solutions vanishing on a portion  of a  $C^{1, \alpha}$ boundary is crucially needed for improving the regularity of the  free boundary from $C^{1, \alpha}$ to $C^\infty$. The only known methods to implement such bootstrapping procedure are that of Kinderlehrer and Nirenberg in \cite{KN77} (see also \cite{KNS}), based on the hodograph transformation combined with the thickness condition in \cite{Ca1, Ca2}, or the more recent  higher-order boundary Harnack  result of De Silva and Savin, see \cite{DS1, DS2}. Given the above mentioned $C^{1, \alpha}$ smoothness of the free boundary established in \cite{DGP} for the subelliptic obstacle problem, it is reasonable to think that the regularity result in the present paper should provide a reasonable ground for investigating  whether the ideas in \cite{KN, DS1, DS2}  can be extended to the setting of Carnot groups.

This paper is organized as follows. In Section 2, we introduce the relevant notations and collect some basic properties of Carnot groups. In Section 3, we   gather some  known preliminary results and establish some regularity estimates  which are relevant to our analysis.  In Section 4, we finally prove our main result, Theorem \ref{main}.

\section{Preliminaries}\label{S:prel}

In this section we collect some basic properties of Carnot groups which will be used in the rest of the paper. We begin with the relevant definition.

\begin{definition}\label{D:CG}
Given $r\in \mathbb N$, a Carnot group of step $r$ is a simply-connected real Lie group $(\G, \circ)$ whose Lie algebra $\g$ is stratified and $r$-nilpotent. This means that there exist vector spaces $\g_1,...,\g_r$ such that  
\begin{itemize}
\item[(i)] $\g=\g_1\oplus \dots\oplus\g_r$;
\item[(ii)] $[\g_1,\g_j] = \g_{j+1}$, $j=1,...,r-1,\ \ \ [\g_1,\g_r] = \{0\}$.
\end{itemize}
\end{definition}
We note that when $r = 1$ then the group is Abelian, and we are back into the familiar Euclidean situation. By the assumption that $\G$ be simply-connected we know that the exponential mapping $\exp: \g \to \G$ is a global analytic diffeomorphism onto, see \cite{V, CGr}. We will use this global chart to identify the point $p = \exp \xi\in \G$ with its logarithmic preimage $\xi\in \g$. 

Once the bracket relations at the level of the Lie algebra are assigned, the group law is too. This follows from the Baker-Campbell-Hausdorff formula, 
see, e.g., sec. 2.15 in \cite{V},
\begin{equation}\label{BCH}
\exp(\xi) \circ \exp(\eta) = \exp{\bigg(\xi + \eta + \frac{1}{2}
[\xi,\eta] + \frac{1}{12} \big\{[\xi,[\xi,\eta]] -
[\eta,[\xi,\eta]]\big\} + ...\bigg)},
\end{equation}
where the dots indicate commutators of order three and higher.  Furthermore, since by (ii) in Definition \ref{D:CG} above all commutators of order $r$ and higher are trivial, in every Carnot group the Baker-Campbell-Hausdorff series in the right-hand side of \eqref{BCH} is finite.

According to (ii) the first layer $\g_1$ plays a special role since it bracket-generates the whole Lie algebra $\g$. It is traditionally referred to as the \emph{horizontal layer} of $\g$.  We assume that a scalar product $\langle \cdot,\cdot \rangle$ is given on $\g$ for which the $\g_j's$ are mutually orthogonal. We let $m_j = \dim \g_j$, $j = 1, \dots, r,$ and denote by
\begin{equation}\label{N}
N = m_1 + \ldots + m_r
\end{equation}
the topological dimension of $\G$. For notational simplicity we will hereafter denote with $m=m_1$ the dimension of the horizontal layer $\g_1$.

Every Carnot group is naturally equipped with \emph{translations} and \emph{dilations}. Using the group law $\circ$ we can respectively define the left- and right-translations in $\G$ by an element $p_0 \in \G$ by
\begin{equation}\label{trans}
L_{p_0}(p) = p_0 \circ p,\quad\ \ \ \ \ \  R_{p_0}(p) = p\circ p_0.
\end{equation}
Given a function $f:\G\to \R$, the action of $L_{p_0}$ and $R_{p_0}$ on $f$ is defined by
\[
L_{p_0} f(p) = f(L_{p_0}(p)),\ \ \ \ \ \ \ R_{p_0} f(p) = f(R_{p_0}(p)),\ \ \ \ \ p\in \G.
\]
A vector field $Y$ on $\G$ is called left-invariant (or right-invariant) if for any $f\in C^\infty(\G)$ and any $p_0\in \G$ one has
\[
Y(L_{p_0} f) = L_{p_0}(Yf),\ \ \ \ \ \ \ \ \ Y(R_{p_0} f) = R_{p_0}(Y f).
\]

To define the dilations in a Carnot group $\G$ one assigns the formal degree $j$ to the $j$-th layer $\g_j$ of the Lie algebra. Then, a family of non-isotropic dilations $\Delta_{\lambda} : \g \rightarrow \g$ is defined by setting for every $\xi = \xi_1 + ... + \xi_r\in \g$, with $\xi_j \in \g_j$, $j = 1,\ldots,r$, 
\begin{equation}\label{dil}
\Delta_{\lambda}\xi = \lambda\xi_1 +\cdots+\lambda^r\xi_r. 
\end{equation}
One then uses the exponential mapping to lift \eqref{dil} to a one-parameter family $\{\delta_\lm\}_{\lm>0}$ in the group $\G$ by letting for $\lm>0$
\begin{equation}\label{dilg}
\delta_{\lm}(p) = \exp\circ\Delta_{\lm}\circ\exp^{-1}(p), \ \ \ \ \ \ p\in\G.
\end{equation}
The dilations are group automorphisms, and thus we have for any $p, p'\in \G$ and $\lambda>0$
\begin{equation}\label{ga}
(\delta_\lambda(p))^{-1} = \delta_\lambda(p^{-1}),\ \ \ \ \ \ \ \ \ \ \delta_\lambda(p) \circ \delta_\lambda(p') = \delta_\lambda(p\circ p').
\end{equation}
The \emph{homogeneous dimension} of $\G$ with respect to the dilations \eqref{dilg} is defined as follows
\begin{equation}\label{QG}
Q = \sum_{j = 1}^r j \operatorname{dim} \g_j.
\end{equation}
The name comes from the fact that the bi-invariant Haar measure $dp$ on $\G$ (which is obtained by pushing forward via the exponential mapping the Lebesgue measure on $\g$) interacts with $\{\delta_\lm\}_{\lm>0}$ according to the formula
\begin{equation}\label{haar}
d \circ \delta_\lm(p) = \lm^Q dp.
\end{equation}
We note that in the non-Abelian setting when $r>1$ the number $Q$ in \eqref{QG} is strictly bigger than the topological dimension $N$ of $\G$. Such number plays a pervasive role in the analysis and geometry of $\G$, see \cite{F}.

Given a function $f:\G\to \R$, the action of $\{\delta_\lm\}_{\lm>0}$ on $f$ is defined by
\[
\delta_\lm f(p) = f(\delta_\lm(p)),\ \ \ \ \ \ \ \ \ \ \ p\in \G.
\]
In view of \eqref{haar}, the action of $\{\delta_\lm\}_{\lm>0}$ on a distribution $T\in \mathscr D'(\G)$ is defined by the equation
\begin{equation}\label{Tlambda}
<\delta_\lambda T, \varphi> = <T,\lambda^{-Q} \delta_{\lambda^{-1}} \varphi>,\quad\quad\quad\quad \varphi\in \mathscr D(\G).
\end{equation}

\begin{definition}\label{D:hom}
Let $\kappa\in \R$. A function $f:\G \to \R$ is called \emph{homogeneous} of degree $\kappa$ if for every $\lm >0$ one has
\[
\delta_\lm f = \lm^\kappa f.
\]
A vector field $Y$ on $\G$ is called homogeneous of degree $\kappa$ if for every $f\in C^\infty(\G)$ one has
\[
Y(\delta_\lm f) = \lm^\kappa \delta_\lm(Yf).
\]
A distribution $T\in \mathscr D'(\G)$ is called homogeneous of degree $\kappa$ if we have in $\mathscr D'(\G)$
\[
\delta_\lm T = \kappa T.
\]
\end{definition} 

\subsection{Horizontal Laplaceans}\label{SS:hl}

With any given orthonormal basis $\{e_1,\ldots,e_m\}$ of $\g_1$ we associate a family $\{X_1,\ldots,X_m\}$ of left-invariant vector fields on $\G$ by letting for $j=1,...,m$ and $p\in \G$
\begin{equation}\label{xjs}
X_j (p) = dL_p(e_j),
\end{equation}
where we have denoted by $dL_p$ the differential of $L_p$. The vector fields $\{X_1,\ldots,X_m\}$ constitute a basis for the so-called horizontal sub-bundle $\cH$ of the tangent bundle $T\G$. Given a point $p\in \G$, the fiber of $\cH$ at $p$ is given by 
\begin{equation}\label{fiber}
\cH_p = dL_p(\g_1).
\end{equation}
We note that the action of $X_j$ on a function $f\in C^\infty(\G)$ is specified by the Lie derivative
\begin{equation}\label{fields}
X_jf(p) = \lim_{t\rightarrow 0} \frac{f(p \exp (te_j)) - f(p)}{t} = \frac{d}{dt}\Big|_{t=0} f(p \exp (te_j)). 
\end{equation}
In this paper given vector fields defined as in \eqref{xjs} above, we will routinely assume that $\G$ has been endowed with a left-invariant Riemannian metric with respect to which the system $\{X_1,...,X_m\}$ is orthonormal. Given a function $f\in C^\infty(\G)$, we denote by 
\begin{equation}\label{hg}
\nabla_\cH f = \sum_{j=1}^m X_j f X_j
\end{equation}
its \emph{horizontal gradient}, i.e., the projection of the Riemannian connection $\nabla$ of $\G$ onto the horizontal bundle $\cH$. We clearly have
\begin{equation}\label{lhg}
|\nabla_{\cH} f|^2 = \sum_{j=1}^m (X_j f)^2.
\end{equation}

From \eqref{fields} and the properties of the dilations $\{\delta_\lm\}_{\lm>0}$, one can easily verify the following.

\begin{lemma}\label{L:xjhom}
For every $j = 1,...,m$ the left-invariant vector fields $X_j$ defined by \eqref{xjs} are homogeneous of degree $\kappa = 1$.
\end{lemma}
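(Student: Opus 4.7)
\medskip

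The plan is to verify the defining identity $X_j(\delta_\lambda f) = \lambda \, \delta_\lambda (X_j f)$ directly from the Lie-derivative formula \eqref{fields} and the automorphism property \eqref{ga} of the dilations, so no real machinery beyond what has already been set up is required.

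First, I would start from \eqref{fields} applied to the dilated function:
\[
X_j(\delta_\lambda f)(p) = \frac{d}{dt}\bigg|_{t=0} (\delta_\lambda f)(p \exp(te_j)) = \frac{d}{dt}\bigg|_{t=0} f\bigl(\delta_\lambda(p \circ \exp(te_j))\bigr).
\]
By the second identity in \eqref{ga}, the dilation distributes over the group product, giving
\[
\delta_\lambda(p \circ \exp(te_j)) = \delta_\lambda(p) \circ \delta_\lambda(\exp(te_j)).
\]
Then I would use the definition \eqref{dilg} of $\delta_\lambda$ via the exponential map together with \eqref{dil}, recalling that $e_j \in \g_1$ so $\Delta_\lambda(te_j) = t\lambda\, e_j$. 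This yields $\delta_\lambda(\exp(te_j)) = \exp(t\lambda\, e_j)$.

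Substituting this back and changing variable $s = t\lambda$ in the derivative reduces the computation to
\[
X_j(\delta_\lambda f)(p) = \frac{d}{dt}\bigg|_{t=0} f\bigl(\delta_\lambda(p) \circ \exp(t\lambda\, e_j)\bigr) = \lambda \, \frac{d}{ds}\bigg|_{s=0} f\bigl(\delta_\lambda(p) \circ \exp(s e_j)\bigr) = \lambda \, (X_j f)(\delta_\lambda(p)),
\]
and the right-hand side is exactly $\lambda\, \delta_\lambda(X_j f)(p)$, which is the homogeneity of degree $1$ in the sense of Definition \ref{D:hom}.

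I do not anticipate any real obstacle: the entire argument is a one-line computation once the automorphism property of $\delta_\lambda$ and the fact that $\Delta_\lambda$ acts as multiplication by $\lambda$ on the first layer $\g_1$ are combined. The only point that requires a bit of attention is being explicit about the role played by the stratification, namely that $e_j \in \g_1$ is essential; if $e_j$ belonged to a higher layer $\g_k$ the same computation would produce $\lambda^k$ in place of $\lambda$, which is of course consistent with the general philosophy that homogeneity of a left-invariant vector field matches the layer in which it lives.
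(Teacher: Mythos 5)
Your computation is correct and is precisely the verification the paper has in mind: the paper does not write out a proof of Lemma \ref{L:xjhom}, but simply remarks that it follows from \eqref{fields} and the properties of the dilations, and your chain (Lie-derivative formula, automorphism property \eqref{ga}, $\Delta_\lambda$ acting as multiplication by $\lambda$ on $\g_1$, then a change of variable $s=t\lambda$) is exactly that verification. Your closing remark about why $e_j\in\g_1$ is essential is a nice clarification and consistent with the stratified structure.
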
 

Before proceeding we note that, if $X_j^\star$ indicates the formal adjoint of $X_j$ in $L^2(\G)$, then $X_j^{\star} = -X_j$, see \cite{F}.

\begin{definition}\label{D:sl}
The \emph{horizontal Laplacean} associated with an orthonormal basis $\{e_1,...,e_m\}$ of the horizontal layer $\g_1$ is the left-invariant second-order partial differential operator in $\G$ defined by
\begin{equation}\label{subl}
 \Delta_\cH = -\sum^m_{j=1}X^{\star}_{j}X_j=\sum^m_{j=1}X^2_j,
\end{equation}
where $\{X_1,...,X_m\}$ are as in \eqref{fields} above.
\end{definition}

By Lemma \ref{L:xjhom} and \eqref{subl} it is clear that every horizontal Laplacean is homogeneous of degree $\kappa = 2$, i.e., one has for every $f\in C^\infty(\G)$
\begin{equation}\label{delinv}
\Delta_\cH(\delta_\lm f) = \lm^2 \delta_\lm(\Delta_\cH f).
\end{equation}
Furthermore, by the assumptions (i) and (ii) in Definition \ref{D:CG} one immediately sees that the system $\{X_1,...,X_m\}$ satisfies the finite rank condition 
\[
\text {rank Lie} [X_1,\ldots,X_m] \equiv N,
\] 
therefore by  H\"ormander's theorem \cite{H} the operator $\Delta_\cH$ is hypoelliptic. However, when the step $r$ of $\G$ is $>1$ this operator fails to be elliptic at every point $p\in \G$.

\subsection{Intrinsic distance and gauge pseudo-distance}\label{SS:intrinsic}

In a Carnot group there exists a privileged left-invariant distance called the \emph{control} or \emph{Carnot-Carath\'eodory distance} $d_C(p,p')$ associated with the horizontal subbundle $\cH$. The properties of such distance have been studied in the celebrated paper \cite{NSW}, see also \cite{Be} and Chapter 4 in \cite{Gems}. A piecewise $C^1$ curve $\gamma:[0,T]\to \G$ is called \emph{horizontal} if there exist piecewise continuous functions $a_i:[0,T]\to \G$ with $\sum_{i=1}^m |a_i| \le 1$ such that
\[
\gamma'(t)=\sum_{i=1}^m a_i(t)X_i(\gamma(t)).
 \]
We define the \emph{horizontal length} of $\gamma$ as $\ell_\cH(\gamma) = T$. Given two points $p, p' \in \G$ denote by  $\mathscr S(p,p')$ the collection of all horizontal curves $\gamma:[0,T]\to \G$ such that $\gamma(0)=p$ and $\gamma(T)=p'$. By the theorem of Chow-Rashevsky we know that $\mathscr S(p,p')\not= \varnothing$ for every $p, p'\in \G$. We define
\[
d_C(p,p')=\underset{\gamma\in \mathscr S(p,p')}{\inf} \ell_\cH(\gamma).
\]
The Carnot-Carath\'eodory 
metric $d_C(p,p')$ is equivalent to a more explicitly
defined pseudo-distance function, called the  \emph{gauge pseudo-distance}, defined as follows.
Let $||\cdot||$ denote the Euclidean distance to the origin in $\g$. For $\xi = \xi_1 +\cdots+\xi_r \in \g$, $\xi_j \in \g_j$, $j = 1,\ldots,r$, one lets
\begin{equation}\label{homd}
|\xi|_{\g} = \left(\sum^r_{j=1} ||\xi_j||^{\frac{2r!}{j}}\right)^{2r!}, \ \ \ \ \quad |p|_{\G}=|\exp^{-1} p|_{\g}\ \quad p \in \G.
\end{equation}
The function $p\to |p|_\G$ is called the non-isotropic group gauge. We will routinely drop the subscript $\G$ and simply denote it by $|p|$. Notice that from \eqref{homd} and \eqref{dilg} we have for any $\lambda>0$
\begin{equation}\label{gau}
|\delta_\lambda(p)| = \lambda |p|.
\end{equation}
The \emph{gauge pseudo-distance} in $\G$ is defined by
\begin{equation}\label{rho0}
d(p,p') = |p^{-1} \circ p'|.
\end{equation}
The function $d(p,p')$ has all the properties of a distance, except the triangle inequality, which is satisfied with a universal constant different from $1$ in the right-hand side, see \cite{FS}. 
Notice that from \eqref{rho0}, \eqref{ga} and \eqref{gau} we have for any $\lambda>0$ 
\begin{equation}\label{gauge2}
d(\delta_\lambda(p),\delta_\lambda(p')) = \lambda d(p,p').
\end{equation}
It is well-known, see for instance Proposition 4.28 in \cite{Gems}, that there exist universal constants $C_1, C_2>0$ such that for $p, p'\in \G$ one has
\begin{equation}\label{rhod}
C_1 d(p,p') \le d_C(p,p') \le C_2 d(p,p').
\end{equation}
An immediate consequence of \eqref{rhod} is the following pseudo-triangle inequality for $d$ 
\begin{equation}\label{t}
d(p,p') \leq C_0 (d(p,p'') + d(p'',p'))
\end{equation}
for all $p, p', p''\in \G$, and a universal $C_0>0$. 
We respectively denote with
\begin{equation}\label{pseudo}
B_C(p,R) = \{p' \in\G\ |\ d_C(p',p)<R\},\ \ \ \ \ \ B(p,R)=\{p' \in\G\ |\ d(p',p)<R\}, 
\end{equation}
the metric and the gauge pseudo-ball centered at $p$ with radius $R$. When the center is the group identity $e$, we will simply write $B_C(R)$ and $B(R)$ instead of $B_C(e,R)$ and $B(e,R)$. 
 
Hereafter, we indicate with $|E| = \int_E dp$ the Haar measure of a set $E\subset \G$.
One easily recognizes from \eqref{haar}, \eqref{gauge2} and \eqref{gau} that, with $\omega_C = \omega_C(\G) = |B_C(1)|>0$ and $\omega = \omega(\G) = |B(1)|>0$, one has for every $p \in \G$ and $R > 0$, 
\begin{equation}\label{volb}
|B_C(p,R)| =\omega_C R^Q,\ \ \ \ \ \ \ \ \ \ \ |B(p,R)| =\omega R^Q.
\end{equation}

We close this subsection by recalling the following result from \cite{NSW}. Denote by $d_\Ri(x,y)$ the left-invariant Riemannian distance in $\G$. 

\begin{lemma}\label{L:dis}
For every connected  $\Omega \subset\subset \G$ there exist $C,\varepsilon
>0$ such that
\begin{equation}\label{xy}
C d_\Ri(p,p') \leq d_C(p,p')\leq C^{-1} d_\Ri(p,p')^\varepsilon,
\end{equation}
for $p, p' \in \Omega$.
\end{lemma}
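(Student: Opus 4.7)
The plan is to handle the two inequalities separately by distinct methods. The left inequality is elementary. Given any admissible horizontal curve $\gamma:[0,T]\to \G$ from $p$ to $p'$, written $\gamma'(t)=\sum_{i=1}^m a_i(t)X_i(\gamma(t))$ with $\sum_i |a_i(t)|\le 1$, the left-invariant Riemannian metric in which $\{X_1,\ldots,X_m\}$ is orthonormal on $\cH$ gives $|\gamma'(t)|_\Ri = (\sum_i a_i(t)^2)^{1/2} \le \sum_i |a_i(t)| \le 1$. Integrating over $[0,T]$ shows that the Riemannian length of $\gamma$ is at most $T=\ell_\cH(\gamma)$; taking the infimum over $\gamma$ yields $d_\Ri(p,p')\le d_C(p,p')$, which is the left-hand inequality (one can take $C=1$).

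The substantive part is the right inequality $d_C(p,p') \le C^{-1} d_\Ri(p,p')^{\varepsilon}$, for which the natural value is $\varepsilon = 1/r$ with $r$ the step of $\G$. The key input is the stratification condition (ii) in Definition \ref{D:CG}: iterated commutators of $X_1,\ldots,X_m$ of length at most $r$ span $T_p\G$ at every point. I would realize a short Riemannian displacement as the endpoint of a horizontal path built from the flows of the $X_j$'s via a commutator-approximation argument. For a bracket of length $k$, say $X_I = [X_{i_1},[X_{i_2},\ldots,[X_{i_{k-1}},X_{i_k}]\ldots]]$, the classical identity $\phi^{Y}_{-\epsilon}\phi^{X}_{-\epsilon}\phi^{Y}_{\epsilon}\phi^{X}_{\epsilon} = \phi^{[X,Y]}_{\epsilon^2} + O(\epsilon^3)$, iterated, produces a concatenation of flows along $\pm X_{i_1},\ldots,\pm X_{i_k}$, each of parameter $\epsilon$, that approximates a step of size $\epsilon^k$ in the $X_I$ direction at total horizontal cost $O(\epsilon)$. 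Choosing a basis of $T_p\G$ made of such iterated brackets of lengths $1,2,\ldots,r$, and executing the resulting concatenations in exponential coordinates of the second kind based at $p$, one can reach any $p'$ with $d_\Ri(p,p')=\delta$ small by a horizontal curve of length $\le C'\delta^{1/r}$.

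A more efficient route is to invoke the ball-box theorem of Nagel--Stein--Wainger \cite{NSW}: for every compact $K\subset \G$ there exist $c_1,c_2>0$ such that, for $p\in K$ and small $R$, the CC-ball $B_C(p,R)$ contains a ``twisted'' box whose sides have length $c_1 R^j$ in directions spanning the layer $\g_j$ at $p$. The smallest side $c_1 R^r$ forces $B_\Ri(p,\rho)\subset B_C(p, c_2\rho^{1/r})$ for small $\rho$, which is exactly the desired inequality with $\varepsilon=1/r$. For $p, p'\in \Omega$ with $d_\Ri(p,p')$ above a fixed threshold, $d_C(p,p')$ is bounded by the CC-diameter of a compact neighborhood of $\overline{\Omega}$ (finite thanks to connectedness and Chow--Rashevsky), and $d_\Ri(p,p')^{\varepsilon}$ is bounded below; a single constant $C$ thus handles both the small and large regimes.

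The main obstacle is the quantitative control of remainder terms in the commutator approximation in a non-Euclidean setting. However, the Baker--Campbell--Hausdorff formula \eqref{BCH}, which terminates after finitely many terms in a Carnot group, makes all the needed estimates explicit, so the argument ultimately reduces to a citation of \cite{NSW}, with the commutator sketch above serving only to indicate the underlying mechanism.
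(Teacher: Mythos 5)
Your proposal is correct. The paper itself offers no proof, simply recalling the statement from \cite{NSW}; your argument supplies the standard one: the left-hand inequality follows because a horizontal curve $\gamma'(t)=\sum_i a_i(t)X_i(\gamma(t))$ with $\sum_i|a_i|\le 1$ has Riemannian length $\int_0^T(\sum_i a_i^2)^{1/2}\,dt\le T=\ell_\cH(\gamma)$ in a left-invariant metric making $\{X_1,\dots,X_m\}$ orthonormal, and the right-hand inequality (with $\varepsilon=1/r$) is exactly the local ball-box comparison of \cite{NSW}, with compactness and connectedness of $\Omega$ handling the regime where $d_\Ri(p,p')$ is bounded away from $0$. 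Since both you and the paper ultimately appeal to \cite{NSW} for the nontrivial direction, the approaches coincide.
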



\subsection{Homogeneous polynomials}\label{SS:wp}

Henceforth, the notation $\{e_{j,1}, \dots, e_{j,m_j}\}$, $j = 1,\dots, r,$ will indicate a fixed orthonormal basis of the $j$-th layer $\g_j$ of the Lie algebra $\g$, keeping in mind that we reserve the notation $\{e_1,...,e_m\}$ for $\{e_{1,},...,e_{1,m_1}\}$. Since the exponential map $\exp : \g \rightarrow \G$ is a global analytic diffeomorphism, it allows to define analytic maps $\xi_i : \G \rightarrow \g_j$, $j = 1,\ldots,r$, by letting $p = \exp\left(\xi_1(p)+\ldots+ \xi_r(p)\right)$. For $p \in \G$, the projection of the \emph{logarithmic coordinates} of $p$ onto the layer $\g_j$, $j = 1,\ldots, r,$ are defined as follows
\begin{equation}
\label{coor}
x_{j,s}(p) = \langle \xi_j(p),e_{j,s}\rangle, \quad s = 1,\ldots,m_j,
\end{equation}
where again for ease of notation we let $(x_1(p),...,x_m(p)) = (x_{1,1}(p),...,x_{1,m}(p))$ indicate the horizontal coordinates of $p$.
Whenever convenient we will routinely omit the dependence in $p$, and simply identify $p$ with its logarithmic coordinates
\begin{equation}\label{pvar}
p \cong (x_1,...,x_m, x_{2,1},...,x_{2,m_2},....,x_{r,1},...,x_{r,m_r}).
\end{equation}
To simplify this lengthy notation we will group variables according to their layer in the following way
\begin{equation}\label{xis}
\xi_1 = (x_1,...,x_m),\ \xi_2 = (x_{2,1},...,x_{2,m_2}), . . . , \xi_r = (x_{r,1},...,x_{r,m_r}).
\end{equation}
Furthermore, at times it will be expedient to break the variables by grouping those in the horizontal layer into a single one as follows
$x = x(p) \cong \xi_1 = (x_1,...,x_m)$, and indicate with $y = y(p)$ the $(N - m)-$dimensional vector
\[
y \cong (\xi_2,...,\xi_r) = (x_{2,1},...,x_{2,m_2},....,x_{r,1},...,x_{r,m_r}).
\]
In this case, we will write $z = (x,y)$.

The reader should see Section C on p. 20 in \cite{FS}. Suppose that for every $j=1,...,r$ we assign a multi-index
\[
\beta_j = (\beta_{j,1},...,\beta_{j,m_j}) \in (\mathbb N\cup \{0\})^{m_j}.
\]
Keeping \eqref{xis} in mind we consider the monomial 
\[
\xi_j^{\beta_j} = x_{j,1}^{\beta_{j,1}}\ . . .\ x_{j,m_j}^{\beta_{j,m_j}}.
\]
We can then form a multi-index $I\in (\mathbb N \cup \{0\})^N$ as follows
\begin{equation}\label{I}
I = (\beta_1,...,\beta_r) = (\beta_{j,1},...,\beta_{j,m},...,\beta_{r,1},...,\beta_{r,m_r}),
\end{equation}  
where $N$ is as in \eqref{N}.
We define  
\begin{equation}\label{hl}
|I| = \sum_{j=1}^r |\beta_j| = \sum_{j=1}^r  \sum_{k=1}^{m_j} \beta_{j,k},\ \ \ \ \ d(I) = \sum_{j=1}^r j |\beta_j| = \sum_{j=1}^r j \sum_{k=1}^{m_j} \beta_{j,k}, 
\end{equation}
and call $d(I)$ the \emph{homogeneous length} of $I$. 
Given $I$ as in \eqref{I}, consider the monomial $z^I$ defined by 
\[
z^I = \xi_1^{\beta_1}\ ...\ \xi_r^{\beta_r} = \prod_{j=1}^r x_{j,1}^{\beta_{j,1}}\ . . .\ x_{j,m_j}^{\beta_{j,m_j}}.
\]
Identifying $p\in \G$ with its logarithmic coordinates $z$, it is clear that the function $f(p) = z^I$ is homogeneous of degree $\kappa = d(I)$ according to Definition \ref{D:hom}.

\begin{definition}
\label{pol}
A \emph{homogeneous polynomial} in $\G$  is a function $P:\G\rightarrow \R$ which in the logarithmic coordinates $z = (x,y)$ can be expressed as
\[
P(z)=\sum_I a_I z^I,
\]
where $a_I\in \R$. The \emph{homogeneous degree} of $P$ is the largest $d(I)$ for which the corresponding $a_I\not= 0$. For any  $\kappa\in \mathbb{N}\cup \{0\}$ we denote by $\mathscr{P}_\kappa$ the set of homogeneous polynomials in $\G$ of homogeneous degree less or equal to $\kappa$. 
\end{definition}

The space $\mathscr{P}_\kappa$ is invariant under left- and right-translation. We note explicitly that, in the logarithmic coordinates, the elements of $\mathscr P_1$ are represented as 
\[
P = \alpha_0 + \sum_{j=1}^m \alpha_j x_j.
\]

\subsection{The Folland-Stein H\"older classes}\label{ss:FS}

A basic tool in this paper are the intrinsic H\"older classes $\Tau^{k, \alpha}$ introduced by Folland and Stein, see Section 5 in \cite{F} and especially \cite{FS}. The functions in these classes are H\"older continuous, along with a certain number of derivatives along the horizontal directions, with respect to the Carath\'eodory distance $d_C(p,p')$. In view of \eqref{rhod} we can, and will, equally well employ the gauge pseudo-distance $d(p,p')$ in $\G$. For the proofs of the results in this subsection we refer the reader to Section C on p. 20 in \cite{FS} and to Chapter 20 in \cite{BLU}. 
 
\begin{definition}\label{hf}
Let $0< \alpha \leq 1$. Given an open set $\Om \subset \G$ we say that $u:\Om\to \R$ belongs to $\Tau^{0, \alpha}(\Om)$ if there exists a positive constant $M$ such that for every $p, p' \in \Om$, 
\[
|u(p) - u(p')| \leq M\ d(p,p')^{\alpha}.
\]
We define the seminorm 
\begin{equation}\label{semi}
[u]_{\Tau^{0,\alpha}(\Om)}= \underset{\underset{p \neq p'}{p, p'\in \Om}}{\sup} \frac{|u(p)-u(p')|}{d(p,p')^{\alpha}}.
\end{equation}
Given $k\in \mathbb N$, the spaces $\Tau^{k, \alpha}(\Om)$ are defined inductively as follows: we say that $u \in \Tau^{k, \alpha}(\Om)$ if $X_i u \in \Tau^{k-1,\alpha}(\Om)$ for every $i=1, .., m$. 
\end{definition}
We note that from \eqref{dilg} and \eqref{gauge2} one has for any $\lambda>0$ 
\begin{equation}\label{udil}
[\delta_\lambda u]_{\Tau^{0,\alpha}(\delta_{\lambda^{-1}}(\Om))} = \lambda^\alpha [u]_{\Tau^{0,\alpha}(\Om)}.
\end{equation}
We now introduce the relevant notion of Taylor polynomials. 

\begin{definition}\label{D:tp}
Suppose $p\in \G, \kappa\in \mathbb{N}\cup\{0\}$, and $f$ is a function whose derivatives $X^If$ are continuous functions in a neighborhood of $p$ for $|I|\leq \kappa$. The \emph{left Taylor polynomial} of $f$ at $p$ of weighted degree $\kappa$ is the unique $P\in \mathscr{P}_\kappa$ such that $X^IP(e)=X^If(p)$ for $|I|\leq \kappa$.
\end{definition}

\subsection{The characteristic set}\label{SS:char}

We recall that an open set $\Om \subset \G$ is said to be of class $C^1$ if for every $p_0\in \pa\Om$ there exist a neighborhood $U_{p_0}$ of $p_0$, and a function $\vf_{p_0} \in C^1(U_{p_0})$, with $|\nabla \vf_{p_0}| \geq \alpha > 0$ in $U_{p_0}$, such that
\begin{equation}\label{ome}
\Om \cap U_{p_0} = \{p\in U_{p_0} \mid \vf_{p_0}(p)<0\}, \quad\pa\Om\cap U_{p_0} =\{p\in U_{p_0} \mid \vf_{p_0}(p)=0\}.
\end{equation}
At every point $p\in \partial \Om \cap U_{p_0}$ the outer unit normal is given by
\[
\nu(p) = \frac{\nabla \vf_{p_0}(p)}{|\nabla \vf_{p_0}(p)|},
\] 
where $\nabla$ denotes the Riemannian gradient.

\begin{definition}\label{D:char}
Let  $\Om \subset \G$ be an open set of class $C^1$. A point $p_0\in \pa\Om$ is called \emph{characteristic} if with $\cH_{p_0}$ as in \eqref{fiber}, one has
\begin{equation}\label{cp}
\nu(p_0) \perp \cH_{p_0}.
\end{equation}
The \emph{characteristic set} $\Sigma = \Sigma_{\Om}$ is the collection of all characteristic points of $\Om$. A boundary point $p_0 \in \pa \Om \setminus \Sigma$ will be referred to as a \emph{non-characteristic} boundary point. 
\end{definition}

We note explicitly that \eqref{cp} is equivalent to saying that,
given any couple $U_{p_0}, \vf_{p_0}$ as in \eqref{ome}, and a family of vector fields $\{X_1,...,X_m\}$ as in \eqref{xjs} above, one has 
\begin{equation}\label{cp2}
X_1\vf_{p_0}(p_0) = 0, \ldots, X_m\vf_{p_0}(p_0) = 0,
\end{equation}
or equivalently
\begin{equation}\label{cp3}
 |\nabla_\cH \vf_{p_0}(p_0)| = 0.
\end{equation}

Bounded domains generically have non-empty characteristic set if they have a trivial topology. For instance, in the Heisenberg group $\Hn$ every bounded $C^1$ open set which is homeomorphic to the sphere $\mathbb S^{2n}\subset \R^{2n+1}$ must have at least one characteristic point. A torus obtained by revolving a sphere around the $t$-axis in $\Hn$ provides an example of a non-characteristic domain. For these facts we refer the reader to \cite{CG}. An example of an unbounded non-characteristic domain is the following. 

\begin{exmp}
 For a fixed vector $a \in \g_1 \setminus \{0\}$ and for $\lambda \in \R$, consider the so-called \emph{vertical half-space}
 \[
H^+_a = \{p\in\G\mid <x(p),a>> \lambda\}.
 \]
Then, using the global defining function $\vf(p) = \lm - <x(p),a>$, one has $|\nabla_\cH \vf(p)| \equiv |a|>0$, and thus in view of \eqref{cp3} one has $\Sigma = \Sigma_{H^a_+} = \varnothing$. 
\end{exmp}

\section{Some basic regularity estimates}\label{S:basic}

In this section we collect some basic regularity estimates, both interior and at the boundary, that we need in our work. We begin with an interior Morrey-Campanato type result of Xu and Zuily in \cite{XZ}, see Theorem \ref{T:XZ} and Corollary \ref{intr} below. After that we state an H\"older estimate at a $C^{1,\alpha}$ non-characteristic portion of the boundary which follows from a result of Danielli in \cite{D}, see Proposition \ref{hol} below. We close the section by proving a smoothness result, this time at a $C^\infty$ non-characteristic portion of the boundary, see Theorem \ref{KNc} below. The main step of the proof consists in showing that a weak solution has bounded horizontal gradient up to the boundary. Once that is accomplished, we appeal to results of Kohn-Nirenberg \cite{KN} or Derridj \cite{De} to infer the smoothness  of the weak solution up to the boundary.

In what follows we assume that  an orthonormal basis $\{e_1,...,e_m\}$ of $\g_1$ has been fixed, and that $\{X_1,...,X_m\}$ are correspondingly defined by \eqref{xjs}. For an  open set $\Om\subset \G$  we denote by $\mathscr{L}^{1,p}(\Om)$, where $1\leq p\leq\infty$,  the Banach space $\{f \in L^p(\Om)\ | \ X_jf \in L^p(\Om), j = 1,\ldots,m\}$ endowed with its natural norm
\[
\|f\|_{\mathscr{L}^{1,p}(\Om)} = \|f\|_{L^{p}(\Om)} +   \sum_{j=1}^m \|X_jf\|_{L^{p}(\Om)}.
\]
The local space $\mathscr{L}^{1,p}_{loc}(\Om)$ has the usual meaning. We also denote by $\mathscr{L}^{1,p}_0(\Om) = \overline{C^\infty_0(\Om)}^{||\cdot||_{\mathscr{L}^{1,p}(\Om)}}$.
We note that the definition of the spaces $\mathscr{L}^{1,p}(\Om)$, $\mathscr{L}^{1,p}_{loc}(\Om)$, $\mathscr{L}^{1,p}_0(\Om)$ does not depend on the particular orthonormal basis $\{e_1,...,e_m\}$ that we pick.

Throughout this section for a given bounded open set $\Om\subset \G$ we consider a weak solution $u$ to the equation
\begin{equation}\label{re}
\sum_{i,j=1}^m X_i^\star (a_{ij} X_j u)= \sum_{i=1}^m X_i^\star f_i + g,
\end{equation}
where on the functions $a_{ij}, f_1,...,f_m, g$ we will make suitable assumptions. In particular, we always assume that $\mathbb{A}= [a_{ij}]$ be a given $m\times m$ symmetric matrix with real coefficients and satisfying the ellipticity condition \eqref{ea0} for some $\lambda > 0$. 
We also always assume that $f_i, g\in L^\infty(\Om)$. Under these assumptions we recall that by a weak solution to \eqref{re} we mean a function $u\in \mathscr{L}^{1,2}_{loc}(\Om)$ such that for any $\zeta\in \mathscr{L}^{1,2}_0(\Om)$, with supp$\ \zeta\subset \Om$, one has
\[
\sum_{i,j=1}^m \int_\Om a_{ij} X_i u X_j \zeta =  \int_\Om \left(\sum_{i=1}^m f_i X_i \zeta + g \zeta\right).
\]
When we combine \eqref{re} with a Dirichlet boundary condition $u = \phi$ on $\pa \Om$, then by a solution of the boundary value problem we mean that $u$ is a weak solution of \eqref{re}, that $\phi\in \mathscr{L}^{1,2}(\Om)$, and that $u - \phi\in \mathscr{L}^{1,2}_0(\Om)$.

We will need the following interior regularity result of Xu and Zuily in \cite{XZ}.

\begin{theorem}\label{T:XZ}
Let $u\in \mathscr{L}^{1,2}_{loc}(B(p_0,1))$ be a weak solution to \eqref{re} in a gauge ball $B(p_0,1)$. Suppose furthermore that for some $0<\alpha<1$ one have $a_{ij} \in \Tau^{0, \alpha}(B(p_0,1))$, $f= (f_1,..., f_m) \in \Tau^{0, \alpha}(B(p_0,1))$, and $g \in L^{\infty}(B(p_0,1))$. Then, $u \in \Tau^{1,\alpha}(B(p_0,1/2))$ and there exists a constant $C = C(\G,\lambda,[a_{ij}]_{\Tau^{0, \alpha}(B(p_0,1))})>0$ such that the following estimates hold
\begin{equation}\label{i10}
 ||\nabla_\cH u||_{L^{\infty}(B(p_0,1/2))}  \leq  C \left\{||u||_{L^{\infty}(B(p_0,1))} + [f]_{\Tau^{0, \alpha}(B(p_0,1))} + ||g||_{L^{\infty}(B(p_0,1))}\right\},
\end{equation}
and
\begin{equation}\label{i100}
[\nabla_\cH u]_{\Tau^{0,\alpha}(B(p_0,1/2))}  \leq C \left\{||u||_{L^{\infty}(B(p_0,1))} + [f]_{\Tau^{0,\alpha}(B(p_0,1))} + ||g||_{L^{\infty}(B(p_0,1))}\right\}, 
\end{equation}
where  $[\cdot]_{\Tau^{0, \alpha}(B(p_0,1))}$ represents the  seminorm  defined by \eqref{semi} above.
\end{theorem}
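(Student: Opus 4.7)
The strategy I would adopt is a version of Campanato's freezing-of-coefficients method, adapted to the sub-Riemannian setting by exploiting the group dilations $\delta_\lm$ and the Folland-Stein characterization of $\Tau^{0,\alpha}$ via mean $L^2$-oscillation from polynomial approximants in gauge balls. The plan has four ingredients: (i) a baseline H\"older continuity for $u$ with a Caccioppoli inequality, (ii) comparison on small balls with the solution of the frozen-coefficient equation, (iii) a Taylor-polynomial decay for the frozen solution, and (iv) an iterative geometric decay that yields the Campanato norm of $\nabla_\cH u$.

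For (i), I would invoke the subelliptic De Giorgi-Nash-Moser theory for divergence-form equations with bounded measurable coefficients (Capogna-Danielli-Garofalo, Lu, et al.), which gives $u\in L^\infty_{\text{loc}}\cap \Tau^{0,\be}_{\text{loc}}$ for some $\be>0$, together with the interior Caccioppoli estimate for $\nabla_\cH u$. For (ii) and (iii), rewrite \eqref{re} on a small ball $B(p_0,r)$ as
\[
\sum_{i,j}a_{ij}(p_0)\,X_i^\star X_j u \;=\; \sum_i X_i^\star \bigg(f_i+\sum_j (a_{ij}(p_0)-a_{ij})X_j u\bigg) + g,
\]
and let $v$ solve $\sum_{i,j}a_{ij}(p_0)X_i^\star X_j v=0$ in $B(p_0,r)$ with $v-u\in \mathscr{L}^{1,2}_0(B(p_0,r))$. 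Since the frozen operator is a left-invariant, $\delta_\lm$-homogeneous H\"ormander sub-Laplacean, hence hypoelliptic with a homogeneous fundamental solution in the sense of Folland, its solution $v$ is smooth and satisfies, by scaling and interior derivative bounds on $X^I v$ for $d(I)\leq 2$, a Taylor-polynomial decay of the form
\[
\int_{B(p_0,\rho)} |v - P_{v,p_0}|^2 \;\leq\; C\Big(\frac{\rho}{r}\Big)^{Q+2(1+\alpha)} \int_{B(p_0,r)} |v|^2, \qquad \rho\leq r/2,
\]
where $P_{v,p_0}\in \mathscr P_1$ is the left Taylor polynomial of $v$ at $p_0$ of weighted degree $1$ in the sense of Definition \ref{D:tp}. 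An energy estimate for $w=u-v$, exploiting $a_{ij}\in \Tau^{0,\alpha}$ and the bounds on $f_i,g$, controls $\|\nabla_\cH w\|_{L^2(B(p_0,r))}^2$ by $r^{Q+2\alpha}$ times a constant depending on the data and the bounds from (i).

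Combining the two estimates and choosing $\rho/r$ small enough gives a discrete decay inequality for
$\Phi(p_0,\rho) = \inf_{P\in \mathscr P_1}\rho^{-Q-2(1+\alpha)}\int_{B(p_0,\rho)}|u-P|^2$,
which iterates geometrically to the uniform bound $\sup_{p_0,\rho}\Phi(p_0,\rho)<\infty$ on a slightly smaller ball; by the Carnot-group Campanato characterization of $\Tau^{0,\alpha}$ via polynomial approximants on gauge balls, this is equivalent to $\nabla_\cH u\in \Tau^{0,\alpha}_{\text{loc}}$ with the quantitative estimate \eqref{i100}, and \eqref{i10} follows by evaluating the same Campanato bound at the macroscopic scale $r\sim 1$. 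The main technical obstacle is step (iii): in a general Carnot group of step $r\geq 2$ one cannot rely on the explicit formulas available in $\Hn$, and must instead draw on Folland's homogeneous fundamental solution, together with the Rothschild-Stein lifting, to extract uniform interior bounds on horizontal derivatives of weighted degree $\leq 2$ that make the Taylor decay for $v$ quantitative. Once (iii) is in place, the perturbation estimate for $w$ and the Campanato iteration are essentially the classical Euclidean argument transplanted to the gauge-ball setting.
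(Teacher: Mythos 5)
The paper does not actually prove Theorem \ref{T:XZ}; it quotes it verbatim as a known interior result from Xu--Zuily \cite{XZ}, describing it as a ``Morrey--Campanato type result.'' There is therefore no internal proof to compare your proposal against, and the freezing-of-coefficients Campanato scheme you outline is a plausible reconstruction of the flavor of that reference: baseline De Giorgi--Nash--Moser regularity and Caccioppoli, comparison with the solution of the frozen homogeneous operator, Taylor-polynomial decay for the smooth comparison function, and a Campanato iteration using the gauge-ball characterization of $\Tau^{0,\alpha}$.

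However, the key quantitative step (iii) is stated too loosely, and as written the iteration does not close. You claim that the energy estimate for $w=u-v$ gives $\|\nabla_\cH w\|_{L^2(B(p_0,r))}^2\lesssim r^{Q+2\alpha}$ ``depending on the data and the bounds from (i).'' But the De Giorgi--Nash--Moser step only yields $u\in\Tau^{0,\beta}$ for some possibly small $\beta>0$, so Caccioppoli gives $\int_{B(p_0,r)}|\nabla_\cH u|^2\lesssim r^{Q-2+2\beta}$; feeding this into the freezing term $[a]^2_{\Tau^{0,\alpha}}\,r^{2\alpha}\int_{B_r}|\nabla_\cH u|^2$ produces $r^{Q+2\alpha-2+2\beta}$, which is $O(r^{Q+2\alpha})$ only if $\beta\geq 1$. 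This is the standard loss in freezing arguments. Likewise, the Campanato quantity $\Phi(p_0,\rho)=\inf_{P\in\mathscr P_1}\rho^{-Q-2(1+\alpha)}\int_{B_\rho}|u-P|^2$ does not inherit a closed decay inequality from the decay you wrote for $v$, because that decay is normalized by $\int_{B_r}|v|^2\sim \int_{B_r}|u|^2$, and $r^{-Q-2(1+\alpha)}\int_{B_r}|u|^2$ blows up as $r\to 0$ when $u$ is merely bounded. To repair the argument one must either (a) run a preliminary bootstrap, iterating a Morrey decay $\int_{B_\rho}|\nabla_\cH u|^2\lesssim\rho^{Q-\epsilon}$ to upgrade $\beta$ arbitrarily close to $1$ and then to $\nabla_\cH u\in L^\infty_{\mathrm{loc}}$, or (b), more in the spirit of your $\Phi$, normalize $u$ at each scale by the approximating polynomial $P_\rho$ and carry along a uniform bound on $|\nabla_\cH P_\rho|$ in the induction, so that the freezing error only sees $\nabla_\cH(u-P_\rho)$ (which decays) plus a controlled constant vector. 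With either fix the rest of the proposal goes through.
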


As a corollary, we have the following corresponding estimate at any given scale $s>0$.

\begin{corollary}\label{intr}
Suppose that $a_{ij} \in \Tau^{0, \alpha}(B(p_0,s))$. Given $0<s\le 1$, let $u$ be a weak solution to \eqref{re} in $B(p_0,s)$, where $f= (f_1,...,f_m) \in \Tau^{0, \alpha}(B(p_0,s))$ and $g \in L^{\infty}(B(p_0,s))$. Then, $u \in \Tau^{1,\alpha}(B(p_0,s/2))$ and satisfies the following estimates
 \begin{equation}\label{i13}
 ||\nabla_\cH u||_{L^{\infty}(B(p_0,s/2))}  \leq  \frac{C}{s} (||u||_{L^{\infty}(B(p_0,s))} + s^{1+\alpha} [f]_{\Tau^{0,\alpha}(B(p_0,s))} + s^2 ||g||_{L^{\infty}(B(p_0,s))}),
\end{equation}
and
\begin{equation}\label{i133}
[\nabla_\cH u]_{\Tau^{0,\alpha}(B(p_0,s/2)}  \leq \frac{C}{s^{1+\alpha}} (||u||_{L^{\infty}(B(p_0,s))} + s^{1+\alpha} [f]_{\Tau^{0,\alpha}(B(p_0,s))} + s^2 ||g||_{L^{\infty}(B(p_0,s))}), 
\end{equation}
 where $C>0$ is as in the previous theorem.
\end{corollary}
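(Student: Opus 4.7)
The plan is to reduce the estimates at scale $s$ to the unit-scale statement of Theorem \ref{T:XZ} by exploiting the two symmetries of the equation: left-invariance of the vector fields $X_j$ and the homogeneity of degree $1$ of $X_j$ under the non-isotropic dilations $\{\delta_\lambda\}_{\lambda>0}$ recorded in Lemma \ref{L:xjhom}.

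First, set $\Phi_s(p) = p_0 \circ \delta_s(p)$ and define the rescaled function $w(p) = u(\Phi_s(p))$ for $p \in B(e,1)$; by left-invariance of the $X_j$ combined with their homogeneity, $X_j w(p) = s\,(X_j u)(\Phi_s(p))$, and iterating, $X_i^\star(a_{ij}^{(s)} X_j w)(p) = s^2\,X_i^\star(a_{ij} X_j u)(\Phi_s(p))$ with $a_{ij}^{(s)} = a_{ij}\circ \Phi_s$. Accordingly, if we set
\[
\tilde f_i(p) = s\, f_i(\Phi_s(p)), \qquad \tilde g(p) = s^2\, g(\Phi_s(p)),
\]
a direct computation (again using $X_i^\star = -X_i$ and the fact that $X_i \tilde f_i = s^2 (X_i f_i)\circ \Phi_s$) shows that $w$ is a weak solution of
\[
\sum_{i,j=1}^m X_i^\star\bigl(a_{ij}^{(s)} X_j w\bigr) = \sum_{i=1}^m X_i^\star \tilde f_i + \tilde g \qquad \text{in } B(e,1).
\]

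Next, I would apply Theorem \ref{T:XZ} to $w$ on $B(e,1)$. Using the scaling identity \eqref{udil}, one has $[\tilde f]_{\Tau^{0,\alpha}(B(e,1))} = s^{1+\alpha} [f]_{\Tau^{0,\alpha}(B(p_0,s))}$ and $\|\tilde g\|_{L^\infty(B(e,1))} = s^2 \|g\|_{L^\infty(B(p_0,s))}$, while $\|w\|_{L^\infty(B(e,1))} = \|u\|_{L^\infty(B(p_0,s))}$. The constant in Theorem \ref{T:XZ} depends on $[a_{ij}^{(s)}]_{\Tau^{0,\alpha}(B(e,1))} = s^\alpha [a_{ij}]_{\Tau^{0,\alpha}(B(p_0,s))}$, which, since $0<s\le 1$, is bounded by $[a_{ij}]_{\Tau^{0,\alpha}(B(p_0,1))}$; this uniformity is what lets a single constant $C$ work across all scales. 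The ellipticity constants for $a_{ij}^{(s)}$ are identical to those for $a_{ij}$, so $\lambda$ is preserved.

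Finally, translate the conclusions back. From $\nabla_\cH w(p) = s\,(\nabla_\cH u)(\Phi_s(p))$ we read off $\|\nabla_\cH w\|_{L^\infty(B(e,1/2))} = s\,\|\nabla_\cH u\|_{L^\infty(B(p_0,s/2))}$, and again using \eqref{udil}, $[\nabla_\cH w]_{\Tau^{0,\alpha}(B(e,1/2))} = s^{1+\alpha}\,[\nabla_\cH u]_{\Tau^{0,\alpha}(B(p_0,s/2))}$. Dividing the two conclusions of Theorem \ref{T:XZ} applied to $w$ by $s$ and $s^{1+\alpha}$ respectively yields \eqref{i13} and \eqref{i133}. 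The only real obstacle is the careful bookkeeping of the scaling weights (degree $1$ for each $X_j$, hence degree $2$ for the second-order operator, and degree $\alpha$ for the $\Tau^{0,\alpha}$ seminorm), but these are all dictated by the homogeneity relations stated in Section \ref{S:prel}.
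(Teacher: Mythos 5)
Your proposal is correct and follows essentially the same route as the paper: reduce to the unit ball by the left translation--dilation $p \mapsto p_0\circ\delta_s(p)$, note the rescaled coefficients and data $a_{ij}\circ\Phi_s$, $sf_i\circ\Phi_s$, $s^2g\circ\Phi_s$, invoke Theorem \ref{T:XZ}, and unwind using \eqref{udil} together with the fact that $s^\alpha[a_{ij}]_{\Tau^{0,\alpha}}\le[a_{ij}]_{\Tau^{0,\alpha}}$ for $s\le 1$ so the constant is uniform in $s$. One small caution: the parenthetical identity $X_i\tilde f_i = s^2(X_if_i)\circ\Phi_s$ is irrelevant (and $f_i$ need not be differentiable); the scaling of the right-hand side is read off directly from the weak formulation under the change of variables, as the Jacobian $s^Q$ cancels on both sides.
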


\begin{proof}
By left translation we may assume that  $p_0=e$. Then, with $\delta_s$ is as in \eqref{dilg} we note that the function $ u_s(p)= u(\delta_{s} p)$ solves in $B(e,1)$ the rescaled equation
\[
\sum_{i,j=1}^m X_i^\star( a_{ij, s} X_j u_s)= \sum_{i = 1}^m X_i^\star f_{i,s} + g_s,
\]
where
\[
a_{ij,s}(p)= a_{ij}(\delta_s p),\ \ \ \ f_{i,s}(p)= s f_i(\delta_s p),\ \ \  \ g_s= s^2 g(\delta_s p).
\]
By the estimates in \eqref{i10}, \eqref{i100} we obtain
\begin{equation}\label{ok}
||\nabla_\cH u_s||_{L^{\infty}(B(p_0,1/2))}  \leq  C_s \left\{||u_s||_{L^{\infty}(B(p_0,1))} + [f_s]_{\Tau^{0,\alpha}(B(p_0,1))} + ||g_s||_{L^{\infty}(B(p_0,1))}\right\},
\end{equation}
and
\begin{equation}\label{okk}
[\nabla_\cH u]_{\Tau^{0,\alpha}(B(p_0,1/2))}  \leq C_s \left\{||u_s||_{L^{\infty}(B(p_0,1))} + [f_s]_{\Tau^{0,\alpha}(B(p_0,1))} + ||g_s||_{L^{\infty}(B(p_0,1))}\right\},
\end{equation}
where $C_s = C(\G,\lambda,[a_{ij,s}]_{\Tau^{0,\alpha}(B(p_0,1))})>0$. Noting that \eqref{udil} gives for any $0<s\le 1$
\[
[a_{ij,s}]_{\Tau^{0, \alpha}(B(p_0,1))} = s^\alpha [a_{ij}]_{\Tau^{0, \alpha}(B(p_0,s))} \le [a_{ij}]_{\Tau^{0, \alpha}(B(p_0,s))},
\]
that by Lemma \ref{L:xjhom} and \eqref{udil} we have 
\begin{align*}
||\nabla_\cH u_s||_{L^{\infty}(B(p_0,1/2))}  & = s\ ||\nabla_\cH u||_{L^{\infty}(B(p_0,s/2))},
\\
[\nabla_\cH u_s]_{\Tau^{0, \alpha}(B(p_0,1/2))} & = s^{1+\alpha} [\nabla_\cH u]_{\Tau^{0, \alpha}(B(p_0,s/2))},
\end{align*}
and that with $f_s = (f_{1,s},...,f_{m,s})$ we have from \eqref{udil}
\[
[f_s]_{\Tau^{0, \alpha}(B(p_0,1))} = s^{1+\alpha} [f]_{\Tau^{0,\alpha}(B(p_0,s))},\ \ \ \  \  ||g_s||_{L^{\infty}(B(p_0,1))} = s^2 ||g||_{L^{\infty}(B(p_0,s))},
\]
from all these estimates and from \eqref{ok}, \eqref{okk} we reach the desired conclusion \eqref{i13}, \eqref{i133}.

\end{proof}

Next we state a H\"older continuity result near a $C^{1,\alpha}$ non-characteristic portion of the boundary that is direct consequence of the results in \cite{D}. 

\begin{proposition}\label{hol}
Suppose that $\mathbb{A}=[a_{ij}]$ be a symmetric constant-coefficient matrix. Let $\Om\subset \mathbb{G}$ be a $C^{1,\alpha}$ domain such that $p_0  \in \pa \Om$ is a non-characteristic point.   Let $u\in \mathscr{L}^{1,2}_{loc}(\Om)\cap C(\overline{\Om})$ be a weak solution of 
\begin{equation}
\begin{cases}
\sum_{i,j=1}^m X_i^\star(a_{ij} X_j u) = \sum_{i=1}^m X_i^\star  f_i + g,
\\
u= \phi\ \text{on}\ \partial \Omega,
\end{cases}
\end{equation}
 where $f^i, g \in L^{\infty}(\Om)$, $\phi \in \Tau^{0,\gamma}( \pa \Omega)$ for some $\gamma>0$. Then, there exist $r_0, C>0$ and  $\beta \in (0,1)$, depending on $\Om$, $\lambda$, $\gamma$ and $M \overset{def}{=}\underset{\Om}{\sup}\ |u|<\infty$, such that 
 \begin{equation}\label{holder}
\underset{\underset{p\neq p'}{p ,p'\in\overline{\Om \cap B_r(p_0)}}}{\sup} \frac{|u(p)-u(p')|}{d(p,p')^{\beta}} < C.
\end{equation}
\end{proposition}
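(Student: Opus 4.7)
The strategy is to reduce the statement to the boundary H\"older estimate of Danielli \cite{D} after two preparatory reductions. First, a normalization of the operator: since $\mathbb{A}$ is a constant positive-definite symmetric matrix, factor $\mathbb{A} = B^T B$ with $B \in GL(m,\R)$, and define $\tilde X_i = \sum_j B_{ij} X_j$. By \eqref{ea0}, the $\tilde X_i$ still span $\g_1$, hence by (ii) of Definition \ref{D:CG} they bracket-generate $\g$, and a direct calculation gives
\[
\sum_{i=1}^m \tilde X_i^\star \tilde X_i \cdot = \sum_{i,j=1}^m X_i^\star(a_{ij} X_j \cdot).
\]
Thus $u$ weakly solves a divergence-form H\"ormander equation driven by $\{\tilde X_i\}$, of the kind covered in \cite{D}. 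Neither the $C^{1,\alpha}$ regularity nor the non-characteristic property of $\pa\Om$ at $p_0$ is affected, since the horizontal distribution $\cH$ is unchanged.

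Next I would extend $\phi$ to a function $\tilde\phi \in \Tau^{0,\gamma}(\overline{\Om})$ with controlled seminorm, by a Whitney-type extension for the Folland--Stein classes (cf.~\cite{FS}). Then $v = u - \tilde\phi$ vanishes continuously on $\pa\Om \cap B(p_0,r_0)$ and satisfies a divergence-form equation of the same structure with modified but still bounded right-hand side. It therefore suffices to prove H\"older continuity up to the boundary for a weak solution that vanishes continuously on the non-characteristic $C^{1,\alpha}$ portion $\pa\Om \cap B(p_0,r_0)$, which is exactly the setting of Danielli's estimate.

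Finally, Danielli's boundary H\"older estimate in \cite{D} rests on a Wiener-type criterion requiring a uniform lower bound on the sub-elliptic capacity of $B(p_0,r) \setminus \Om$ as $r \to 0^+$. To verify this, I would use the local $C^{1,\alpha}$ defining function $\vf_{p_0}$: by \eqref{cp3} the non-characteristic hypothesis gives $|\nabla_\cH \vf_{p_0}(p_0)| > 0$, so by continuity $|\nabla_\cH \vf_{p_0}| \ge c_0 > 0$ in a Euclidean neighborhood of $p_0$. Flowing from points near $p_0$ along the horizontal direction $-\nabla_\cH \vf_{p_0}/|\nabla_\cH \vf_{p_0}|$ moves uniformly into $\Om^c$, so for every sufficiently small $r$ the Haar measure of $B(p_0,r) \setminus \Om$ is comparable to $|B(p_0,r)|$. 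This volumetric thickness translates into the required capacity density, and Danielli's theorem then delivers \eqref{holder} with some exponent $\beta \in (0,1)$ depending on $\Om$, $\lambda$, $\gamma$, and $M$.

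The main obstacle is precisely this last step: Euclidean $C^{1,\alpha}$ regularity of $\pa\Om$ does not by itself provide an exterior ball in the Carnot--Carath\'eodory sense, so one must exploit the non-characteristic hypothesis to exhibit a horizontal direction that is transverse to $\pa\Om$ and then convert the resulting Euclidean thickness of $\Om^c$ into a uniform sub-elliptic capacity density at all scales $r \in (0,r_0]$; once this density is in hand, the appeal to \cite{D} is direct.
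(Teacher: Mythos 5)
Your overall strategy matches the paper's: reduce to Danielli's boundary H\"older estimate \cite{D} after verifying that $\G\setminus\overline{\Om}$ is uniformly thick at the non-characteristic boundary points, using the non-degeneracy of $\nabla_\cH\vf_{p_0}$ to produce an exterior gauge ball (the paper constructs $B(\exp(-se_m),\kappa s)\subset\G\setminus\overline{\Om}$ explicitly, which is the quantitative version of your ``flow along $-\nabla_\cH\vf_{p_0}$'' picture). The core geometric step and the final appeal to Theorem 3.10 of \cite{D} are therefore sound.

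However, your second preparatory reduction contains a genuine gap. You extend $\phi\in\Gamma^{0,\gamma}(\pa\Om)$ to $\tilde\phi\in\Gamma^{0,\gamma}(\overline{\Om})$ (fine, e.g. by McShane) and then claim that $v=u-\tilde\phi$ ``satisfies a divergence-form equation of the same structure with modified but still bounded right-hand side.'' The modified flux is $\tilde f_i=f_i-\sum_j a_{ij}X_j\tilde\phi$, and $X_j\tilde\phi$ is \emph{not} bounded: $\tilde\phi$ is only H\"older, not $\Gamma^{1,\gamma}$ or even $\mathscr{L}^{1,\infty}$, so its horizontal gradient exists only distributionally and is in general unbounded. (Compare Step 1 of the proof of Theorem \ref{main}, where this subtraction is legitimate precisely because there $\phi\in\Gamma^{1,\alpha}$.) Thus the reduction to zero boundary data is not executable in the framework you set up. It is also unnecessary: Danielli's Wiener-type theorem handles H\"older boundary data directly, which is how the paper proceeds. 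The first reduction (factoring $\mathbb{A}=B^TB$ and passing to $\tilde X_i=\sum_jB_{ij}X_j$) is algebraically correct and harmless, but note it changes the horizontal inner product and hence the Carnot--Carath\'eodory metric and gauge; the new metric is only \emph{equivalent} to $d$, with constants depending on $\lambda$, and this equivalence should be recorded since the conclusion \eqref{holder} is phrased in the original gauge $d$. This reduction too is unneeded, as \cite{D} already covers constant-coefficient divergence-form operators built from a H\"ormander system.
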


\begin{proof}
By left translation, we may assume that $p_0=e$, the group identity. Since $p_0 = e$ is non-characteristic, by Definition \ref{D:char} we must have for some $i\in \{1,...,m\}$
\[
<X_i(e),\nu(e)> \not= 0.
\]
By an appropriate orthogonal transformation in the horizontal layer $\g_1$ and by the implicit function theorem, we can assume that there exist $r_0 >0$ such that $\Om \cap B(r_0)$ can be represented in the logarithmic coordinates as $\{ x_m > h(x', y)\}$, 
for a certain $h\in C^{1,\alpha}$ such that $h(0,0)=0$, $\nabla_{x'} h(0,0)=0$. Using the Lipschitz continuity of $h$ in $x', y$, similarly to the proof of Theorem 7.6 in \cite{DGP} we can assert that for sufficiently small $\kappa>0$ the gauge ball $B(\text{exp}(-se_m), \kappa s)$ is contained in $\mathbb{G} \setminus \overline{\Om}$ for all $s$ sufficiently small. This implies in a straightforward way that $\G\setminus \overline \Om$ satisfies the positive density condition at $e$ in the sense of Definition 3.6 in \cite{CGN}. Moreover, using the $C^{1, \alpha}$ character of the domain $\Om$, and by possibly taking a smaller $r_0$, we can ensure that the uniform positive density condition is satisfied at any $p \in \pa \Om \cap B(e,r_0)$. This suffices to apply Theorem 3.10 from \cite{D} to conclude the validity of the estimate \eqref{holder}.

\end{proof}

\begin{remark}\label{R:D}
We note that, although in the setting of \cite{D} both $f=(f_1, ... f_m)$ and $g$ are $\equiv 0$, nevertheless the proof there can be adapted in a straightforward way to cover the situation of Proposition \ref{hol}. 
\end{remark}

We close this section with the following smoothness result at non-characteristic points. 

\begin{theorem}\label{KNc}
 Let $\mathbb{A}=[a_{ij}]$ be a symmetric constant-coefficient matrix. Assume that $\Om$ be a $C^\infty$ domain, and let $u\in \mathscr{L}^{1,2}_{loc}(\Om)\cap C(\overline{\Om})$ be a weak solution of \eqref{re} with $f_i, g \equiv 0$.  Let  $p_0\in\pa\Om$ be  a non-characteristic point and assume that  for some neighborhood $W=B_\Ri(p_0, r_0)$ of $p_0$, we have that   $u \equiv 0$ in $\pa \Om \cap W$.  Then there exists an open neighborhood $V$ of $p_0$ depending on $W$  and $\Om$  and a positive constant $C^\star=C^\star(M, p_0)>0$, depending on $p_0$ and $M= \underset{\Om}{\sup}\ |u|$, such that
 \begin{equation}
 \label{c2u}
 \|u\|_{C^2(\overline{\Om}\cap V)}\leq C^\star.
 \end{equation}
\end{theorem}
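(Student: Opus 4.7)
The overall strategy follows the roadmap indicated in the remarks preceding the statement. First I would establish a uniform $L^{\infty}$ bound on the horizontal gradient $\nabla_\cH u$ in a one-sided neighborhood of $p_0$, and then invoke the classical boundary hypoellipticity theorem of Kohn--Nirenberg \cite{KN} (equivalently Derridj \cite{De}) to upgrade this bound to full $C^\infty$ regularity of $u$ up to $\pa\Om$ on a (possibly smaller) neighborhood $V$ of $p_0$. Linearity of the problem allows one to follow the dependence on $M$ through the construction and yields the quantitative estimate \eqref{c2u}.

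Producing the horizontal gradient bound is the main step, which I would carry out via a barrier. Since $\mathbb{A}$ is constant and $X_i^\star=-X_i$, the equation reads $\sum_{i,j}a_{ij}X_iX_j u=0$. The $C^\infty$ regularity of $\pa\Om$ supplies a smooth local defining function $\vf$ with $\Om\cap U=\{\vf<0\}$ on a neighborhood $U$ of $p_0$, and the non-characteristic hypothesis at $p_0$ translates into $|\nabla_\cH \vf|\ge c_0>0$ on $U$. Testing a barrier of the form $w=-C\vf-D\vf^2$ yields
\[
\sum_{i,j}a_{ij}X_iX_j w = -C\sum_{i,j}a_{ij}X_iX_j\vf - 2D\sum_{i,j}a_{ij}(X_i\vf)(X_j\vf) - 2D\vf\sum_{i,j}a_{ij}X_iX_j\vf.
\]
The ellipticity \eqref{ea0} combined with $|\nabla_\cH\vf|\ge c_0$ bounds the middle term above by $-2D\lm c_0^2$; after shrinking $U$ so that $|\vf|$ is small and then choosing $D$ sufficiently large, this negative contribution dominates the other two terms, giving $\sum_{i,j}a_{ij}X_iX_j w\le 0$ in $\Om\cap U$. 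Pairing this supersolution property with the H\"older decay $|u(p)|\le C d(p,p_0)^\beta$ supplied by Proposition \ref{hol} on the lateral part of $\pa(\Om\cap U')$, where $U'\subset U$ is a suitable smaller subneighborhood, and choosing $C$ proportional to $M$, the weak maximum principle applied to $w\pm u$ yields the linear off-boundary bound $|u(p)|\le C''M\, d_\Ri(p,\pa\Om)$ throughout $\Om\cap U'$.

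With this linear off-boundary decay in hand, the gradient bound follows from the interior Schauder estimate of Xu--Zuily. For $p\in\Om\cap U'$ set $r=d_\Ri(p,\pa\Om)$; since $\pa\Om$ is non-characteristic near $p_0$, Lemma \ref{L:dis} together with \eqref{rhod} provides a universal $\kappa>0$ for which the gauge ball $B(p,\kappa r)\subset\Om$, and on this ball the preceding barrier estimate gives $\|u\|_{L^{\infty}(B(p,\kappa r))}\le CMr$. The rescaled estimate \eqref{i13} applied at scale $\kappa r$ with $f\equiv 0$ and $g\equiv 0$ then produces $|\nabla_\cH u(p)|\le \tilde C M$, uniformly in $p\in\Om\cap U'$. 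Once the boundedness $\nabla_\cH u\in L^{\infty}(\Om\cap U'')$ is secured, the boundary hypoellipticity results of Kohn--Nirenberg \cite{KN} or Derridj \cite{De} for constant-coefficient second-order sum-of-squares-type operators at a smooth non-characteristic boundary deliver $u\in C^\infty(\overline{\Om}\cap V)$ on a smaller neighborhood $V$ of $p_0$, whence the $C^2$ bound \eqref{c2u} follows, with $C^\star$ proportional to $M$ by construction.

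The main technical obstacle is the barrier: the first-order quantity $\sum a_{ij}X_iX_j\vf$ has no a priori sign, so the linear term $-C\vf$ alone cannot yield a supersolution, and the quadratic correction $-D\vf^2$ is indispensable. Its leading contribution $-2D\sum a_{ij}(X_i\vf)(X_j\vf)$ is strictly negative only by virtue of the horizontal ellipticity of $\mathbb{A}$ and of the non-degeneracy $|\nabla_\cH \vf|\ge c_0>0$; the latter is exactly the quantitative content of the non-characteristic hypothesis, without which this entire barrier strategy breaks down.
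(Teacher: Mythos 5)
Your overall roadmap --- establish a linear off-boundary bound $|u|\lesssim M\,d_\Ri(\cdot,\pa\Om)$ via a barrier, rescale with the interior estimate of Xu--Zuily to get $\|\nabla_\cH u\|_{L^\infty}\lesssim M$ near $\pa\Om$, and then invoke Kohn--Nirenberg/Derridj --- coincides with the paper's. The supersolution computation for $w=-C\vf-D\vf^2$ is also correct: the negative definite term $-2D\sum a_{ij}(X_i\vf)(X_j\vf)\le -2D\lm c_0^2$ does indeed dominate after shrinking $U$ and enlarging $D$. The problem is the boundary comparison, not the PDE inequality.

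You want to apply Bony's maximum principle to $w\pm u$ on $\Om\cap U'$, which requires $w\ge|u|$ on all of $\pa(\Om\cap U')$. On $\pa\Om\cap U'$ both vanish, fine. But on the lateral part $\Om\cap\pa U'$, near the ``corner'' $\pa\Om\cap\pa U'$, your barrier vanishes \emph{linearly} in $\vf$, i.e., $w\approx C|\vf|\approx C\,\delta$ where $\delta=d_\Ri(\cdot,\pa\Om)$, whereas Proposition \ref{hol} only gives H\"older decay $|u|\lesssim M\,\delta^\beta$ with $\beta<1$. Since $\delta^\beta\gg\delta$ as $\delta\to0$, no choice of $C$ (proportional to $M$, to $M/r$, or otherwise) makes $w\ge|u|$ hold uniformly on $\Om\cap\pa U'$ near the corner; the comparison fails on a whole boundary strip and the conclusion $|u|\le w$ does not follow. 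This is precisely the obstruction the paper's barrier is built to avoid: there one takes $\psi(q)=d_\Ri(q,\tilde p)^2$ for an \emph{exterior} Riemannian tangent ball $B_\Ri(\tilde p,r_1)$ touching $\pa\Om$ only at the center boundary point, and uses $f=\overline M\bigl(1-(r_1^2/\psi)^k\bigr)$. The crucial geometric input \eqref{unf2} shows that every $q\in\Om\cap\pa B_\Ri(p_0,r_1)$ --- \emph{including} the corner, which sits at Riemannian distance $\approx r_1$ from the tangency point and hence at distance $\ge(1+\gamma_0)r_1$ from $\tilde p$ --- satisfies $f(q)\ge\overline M/2=M$. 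Thus $f$ dominates $|u|$ on the entire lateral boundary with no appeal to any a priori H\"older decay of $u$. Your barrier, being a function of the defining function $\vf$ alone, vanishes identically on $\pa\Om$ and cannot have this property; you would need either a different comparison region whose lateral boundary stays a fixed $\vf$-distance from $\pa\Om$ (which then does not reach the boundary), or an iteration/bootstrapping step to upgrade $\beta$ to $1$ --- neither of which is in your write-up.
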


\begin{proof}
In the following discussion we will denote by $B_\mathscr R(p,r)\subset \G$ the Riemannian ball centered at a point $p$ with radius $r$, and by $d_\mathscr R(p_1, p_2)$ the Riemannian distance between two points  $p_1, p_2\in \G$. Let $p_0\in\pa\Om$ be a non-characteristic point. It suffices to show that there exists an open neighborhood $V = B_\Ri(p_0,r_1)$ such that 
\begin{equation}\label{crux}
\nabla_\cH u\in L^{\infty}(\overline{\Om}\cap V).
\end{equation}
This obviously implies that 
\[
u\in\mathscr{L}^{1,\infty}(\overline{\Om}\cap V)\subset \mathscr{L}^{1,2}(\overline{\Om}\cap V).
\]
Once we know this, we can apply the regularity results  of \cite{De} (see also \cite{KN}) to conclude $u\in C^{\infty}(\overline{\Om}\cap V)$, and in particular  also conclude that the estimate (\ref{c2u}) holds.

Since $\mathbb{A}=[a_{ij}]$ is a constant-coefficient matrix, by a suitable linear transformation in the horizontal layer $\g_1$ we may assume without loss of generality that $a_{ij}=\delta_{ij}$ for  $i,j\in\{1,\ldots,m\}$. Thus, we have in $\Om$
\[
\Delta_\cH u = \sum^m_{i,j=1}a_{ij}X_iX_j u  = 0.
\]
Since $p_0$ is a non-characteristic point by using the $C^{2}$ character of $\Om$ we can find a sufficiently small $r_1>0$ such that $r_1\leq r_0$, a local defining function $\vf\in C^2(B_\Ri(p_0,r_1))$ for $\Om$, and a number $\gamma_0>0$ such that
\begin{equation}\label{unf0}
|\nabla_\cH \vf(p)| \ge \gamma_0
\end{equation}
for every $p\in \partial \Om \cap B_\Ri(p_0,r_1)$. Notice that, in view of \eqref{cp3}, the condition \eqref{unf0} implies that all points $p\in \pa \Om \cap B_\Ri(p_0, r_1) $ are non-characteristic. We can further assume that for every such $p$ there exists a point $\tilde p\in \G\setminus \overline \Om$ such that the geodesic ball $B_\Ri(\tilde p,r_1)$ be tangent from outside to $\pa \Om$ in $p$. This means that 
\begin{equation}\label{unf1}
\overline{B}_\Ri(\tilde p,r_1) \cap \overline{\Om}= \{p\}.
\end{equation}
Moreover  by choosing a  smaller $\gamma_0$ if needed, we can ensure that  for all $q \in \Om\cap \pa B_\Ri(p,r_1)$ one has
\begin{equation}\label{unf2}
d_\Ri(q,\tilde p) \geq (1+\gamma_0) r_1.
\end{equation}
Given any $p\in \partial \Om \cap B_\Ri(p_0,r_1)$, consider the function $\psi(q) = d_\Ri(q,\tilde p)^2$, where $\tilde p$ is as in \eqref{unf1}. Notice that by Gauss' lemma for we have that $\nabla \psi(p) = \alpha \nu(p)$, for some $\alpha = \alpha(p)\in \R$. Since $\nu(p)$ is parallel to $\nabla \vf(p)$, we conclude that $\nabla \psi(p) = \beta(p) \nabla \vf(p)$ for some $\beta(p)\in \R$ such that $|\beta(p)| \ge \ve_0>0$. By this observation and \eqref{unf0}, by possibly choosing a smaller $r_1$, we infer the existence of $\delta_0>0$ such that for all $q \in \Om \cap B_E(p,r_1)$ one has
\begin{equation}\label{t1}
|\nabla_\cH \psi(q)| \geq \delta_0.
\end{equation}

Next, we show that given any $p \in \pa \Om \cap B_\Ri(p_0,r_1)$ the following estimate holds 
\begin{equation}\label{lipu}
|u(q)|\leq K_1 d_\Ri(q,p) \quad \text{ for all}\ q \in \Om\cap  B_\Ri(p,r_1),
\end{equation}
where $K_1$ depends on $r_1, \gamma_0, \delta_0$  as in \eqref{unf0}, \eqref{t1}. By continuity, it suffices to show \eqref{lipu} for  $p= p_0$.  We denote by $\tilde p_0\in \G\setminus \overline \Om$ the point corresponding to $p_0$ as in \eqref{unf1} above, and we now let $\psi(q) =  d_\Ri(q,\tilde p_0)^2$. We note that $\psi\in C^\infty(\G)$. We introduce the function
\begin{equation}\label{funf}
f(q) \overset{def}{=} \overline{M} \left(1-\left(\frac{r_1^2}{\psi(q)}\right)^k\right),\quad\ \ \ \ \ \  q\in \Om\cap B_\Ri(p_0,r_1),
\end{equation}
where $\overline{M}$ and $k$ will be chosen later. We clearly have 
\begin{equation}\label{fgu1}
f \ge 0 = u\quad \ \ \ \ \ \ \text{on} \  \pa\Om\cap B_\Ri(p_0,r_1).
\end{equation}
If instead $q\in \Om\cap \pa B_\Ri(p_0,r_1)$, then it follows from \eqref{unf2}  that
\[
f(q) \geq \overline{M}\left(1-\left(\frac{1}{(1+\gamma_0)^2}\right)^k\right) \geq \frac{\overline{M}}{2},
\]
provided that $k\ge k_0 \overset{def}{=} \frac{\log 2}{\log(1+\gamma_0)}$. If we thus let $\overline{M}= 2 M$ and $k\ge k_0$, then we conclude that
\begin{equation}\label{fgu2}
f\ge u \quad \ \ \ \ \ \ \ \ \ \ \ \ \text{on}\  \Om\cap \pa B_\Ri(p_0,r_1).
\end{equation}
If we could show that $\Delta_\cH f \leq 0$ in $\Om\cap  B_\Ri(p_0,r_1)$, then from (\ref{fgu1}), (\ref{fgu2}) and Bony's maximum principle we would conclude that 
\begin{equation}\label{bony}
u\leq f \quad \ \ \ \ \ \ \ \ \text{in}\ \Om\cap  B_\Ri(p_0,r_1).
\end{equation}
We now observe that if $h\in C^2(\R)$, $v\in C^2(\G)$, then
\[
\Delta_\cH h(v) = h''(v)|\nabla_\cH v|^2+h'(v)\Delta_\cH v.
\]
Using this formula with $v=\psi$ and $h(t)= \overline M\left(1-(r_1^2/t)^k\right)$, we obtain
\[
\Delta_\cH f = \overline M k\frac{r_1^{2k}}{\psi^{k+2}}\left(\psi \Delta_\cH \psi - (k+1) |\nabla_\cH \psi|^2\right).
\]
Note that in $\Om\cap  B_\Ri(p_0,r_1)$ we have from \eqref{t1}  
\[
|\nabla_\cH \psi|^2 \ge \delta_0^2.
\]
Therefore, if $k$ is such that
\[
k+1 \ge \max\left\{\frac{\|\psi \Delta_\cH \psi\|_{L^{\infty}(\overline B)}}{\delta_0^2},k_0\right\},
\]
where $B\subset \G$ is a large (but fixed) geodesic ball such that $\overline \Om \subset \frac 12 B$,
then we have $\Delta_\cH f \leq 0$ in $\Om\cap  B_\Ri(p_0,r_1)$, and thus \eqref{bony} holds. In a similar way, if we take $-f$ as a lower barrier we can ensure that
\begin{equation}\label{bony2}
-f \leq u \quad\ \ \ \ \ \ \ \ \  \text{in} \ \Om\cap  B_\Ri(p_0,r_1).
\end{equation}
Combining \eqref{bony} and \eqref{bony2}, we conclude
\[
|u|\leq f \quad \ \ \ \ \ \ \ \ \  \text{in} \ \Om\cap  B_\Ri(p_0,r_1).
\]
Since $\psi\ge r_1^2$ in $\Om\cap  B_\Ri(p_0,r_1)$, by \eqref{funf} we see that the function $f$ is Lipschitz continuous in $\Om\cap  B_\Ri(p_0,r_1)$. From this fact, and $u(p_0)=f(p_0)=0$, we thus conclude that there exists a constant $K_1>0$ such that \eqref{lipu} holds for $p=p_0$, and therefore for all $p\in \pa \Om\cap  B_\Ri(p_0,r_1)$, with $q \in \Om\cap  B_\Ri(p,r_1)$.

If we now combine \eqref{lipu} with the left-hand side of \eqref{xy}, we can assert that there exists a constant $K>0$ such that 
\begin{equation}\label{lipu2}
|u(q)|\leq K_2 d_C(q,p) \quad \text{for all}\ q \in \Om\cap  B_\Ri(p,r_1),\ p\in \pa \Om\cap  B_\Ri(p_0,r_1)
\end{equation}
where $K_2>0$ depends on $r_1, \gamma_0, \delta_0$  as in \eqref{unf0}, \eqref{t1}. We stress that it is crucial that in the right-hand side of \eqref{lipu2} we have the Carnot-Carath\'eodory distance $d_C(q,p)$.
From \eqref{lipu2} and the interior Schauder estimates for solutions of $\Delta_\cH u = 0$ obtained in \cite{CDG}, arguing as in the proof of Theorem 6.6 in \cite{CGN} (see also Corollary 6.7 in the same paper) it is not difficult to show at this point that \eqref{crux} holds, i.e., $\nabla_\cH u\in L^{\infty}$ near and up to the non-characteristic portion of the boundary.

\end{proof}

\section{ Proof of Theorem \ref{main}}\label{S:mr}

The present section is devoted to proving our main result, Theorem \ref{main}. Before proceeding with its proof we  establish a  compactness lemma which is one of its crucial ingredients. Such lemma is inspired by the seminal paper of Caffarelli \cite{Ca}. 

We assume throughout the section that $\mathbb{A} =  [a_{ij}]$ be a given $m\times m$ symmetric matrix-valued function in $\G$ with real continuous coefficients and satisfying the ellipticity condition \eqref{ea0} for some $\lambda > 0$. We also denote by $\mathbb A^0$ the constant-coefficient matrix with entries $a^0_{ij} = a_{ij}(e)$, where $e\in \G$ is the group identity.  
In our ensuing discussion the gauge pseudo-ball $B(e,s)$ will be denoted by $B(s)$ for any given $s>0$. Given a bounded open set $\Om\subset \G$, with $p_0\in \pa \Om$ we use the notation  
$\V_s, \mathscr S_s$ in \eqref{VS}.
In the statement of Lemma \ref{compactness} below we assume without restriction that $p_0 = e$. We can always achieve this by a left-translation, considering the domain $\tilde \Om = L_{p_0^{-1}}(\Om)$ instead of $\Om$.

\begin{lemma}\label{compactness} 
Suppose that \eqref{ea0} hold. Assume that for a given $p_0 = e\in \pa \Om$ the set $\mathscr S_1$ be non-characteristic, and that in the logarithmic coordinates $\V_1$ is given by $\{(x,y)\mid x_m > h(x',y)\}$,
where $h \in C^{1,\alpha}$ for some $0<\alpha<1$, and $x'=(x_1,..., x_{m-1})$. Let $u\in \mathscr{L}^{1,2}_{loc}(\V_1) \cap C(\overline{\V_1})$ be a weak solution to \eqref{dp0}
in $\V_1$
with $\|u\|_{L^{\infty}(\V_1)} \leq 1$.  
Then, given $\ve >0$ there exists $\delta = \delta(\ve)>0$ such that if 
\begin{equation}\label{dilf1}
\|h\|_{C^{1,\alpha}},\ ||a_{ij}-a^{0}_{ij}||_{L^{\infty}(\V_1)},\ ||\phi||_{\Tau^{0,\alpha}(\mathscr S_1)},\ ||f_{i}||_{L^{\infty}(\V_1)},\ ||g||_{L^{\infty}(\V_1)} \leq \delta,
\end{equation}
we can find  $w\in C^2( \overline{\V_{1/2}})$  such that
\[
\|u-w\|_{L^{\infty}(\V_{1/2})}\leq \ve,
\]
with 
\[
\|w\|_{C^2(\overline{\V_{1/2}})}\leq C C^\star.
\]
Here, the constant $C>0$ is a universal constant, whereas $C^\star$ can be taken as that in the estimate \eqref{c2u} in Lemma \ref{KNc}, corresponding to $p_0 = e$ and $M=1$.
\end{lemma}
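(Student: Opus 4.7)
The natural approach is a Caffarelli-style contradiction/compactness argument. Suppose the conclusion fails. Then there exist $\ve_0 > 0$ and sequences $\delta_k \downarrow 0$ together with $h_k$, $a_{ij}^k$, $\phi_k$, $f_i^k$, $g^k$ satisfying the smallness bounds \eqref{dilf1} with $\delta_k$ in place of $\delta$, and corresponding weak solutions $u_k$ of \eqref{dp0} on the domains $\V_s^k = B(s) \cap \{x_m > h_k(x',y)\}$ with $\|u_k\|_{L^\infty(\V_1^k)} \le 1$, yet no admissible $w$ lies within $\ve_0$ of $u_k$ in $L^\infty(\V_{1/2}^k)$. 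Write $\V_s^\infty = B(s) \cap \{x_m > 0\}$ for the limiting half-space domain.

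The first task is to extract a uniformly convergent subsequence. Interior H\"older continuity follows from Theorem \ref{T:XZ} applied at each interior scale. Up to the boundary, Proposition \ref{hol} is applicable: since $\|h_k\|_{C^{1,\alpha}} \le \delta_k$, the boundary $\mathscr S_1^k$ is uniformly non-characteristic (the outer normal stays within $\delta_k$ of $-e_m \in \g_1$), and the positive-density constant entering the proof in \cite{D} can be traced to depend only on $\|h_k\|_{C^{1,\alpha}}$, hence is uniform in $k$. This yields a uniform H\"older exponent $\beta$ and constant on $\overline{\V_{3/4}^k}$. The domains $\V_{3/4}^k$ converge to $\V_{3/4}^\infty$ in Hausdorff distance, and an Arzel\`a--Ascoli argument provides a subsequence with $u_k \to u_\infty$ uniformly on compacta of $\overline{\V_{3/4}^\infty}$, with $\|u_\infty\|_{L^\infty} \le 1$.

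Next I would pass to the limit in the weak formulation. Caccioppoli-type energy estimates yield uniform $L^2_{loc}$ bounds on $\nabla_\cH u_k$, and hence weak compactness of $\nabla_\cH u_k$ in $L^2_{loc}(\V_{3/4}^\infty)$; combined with the uniform convergence $a_{ij}^k \to a_{ij}^0$ and the smallness of $\|f_i^k\|_\infty$, $\|g^k\|_\infty$, the limit $u_\infty$ is a weak solution of the constant-coefficient equation $\sum_{i,j} a_{ij}^0 X_i X_j u_\infty = 0$ in $\V_{3/4}^\infty$. Since $\|\phi_k\|_{L^\infty} \le \delta_k \to 0$, one also gets $u_\infty \equiv 0$ on $B(3/4) \cap \{x_m = 0\}$. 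The half-space $\{x_m > 0\}$ is $C^\infty$, and $e$ is a non-characteristic boundary point because the outer normal $-e_m$ lies in $\cH_e = \g_1$. Theorem \ref{KNc} therefore supplies a Riemannian neighborhood $V$ of $e$ and a bound $\|u_\infty\|_{C^2(\overline{\V_{3/4}^\infty}\cap V)} \le C^\star$ corresponding to $M = 1$. Taking $w$ to be a smooth extension of $u_\infty$ across $\{x_m = 0\}$, restricted to $\overline{\V_{1/2}^k}$, produces the required competitor: its $C^2$-norm is controlled by $C C^\star$, and $\|u_k - w\|_{L^\infty(\V_{1/2}^k)} \to 0$ because of the uniform convergence on compacta together with the fact that $\V_{1/2}^k \triangle \V_{1/2}^\infty$ is contained in an $O(\delta_k)$-strip around $\{x_m = 0\}$ on which both $u_k$ (H\"older up to its boundary) and $w$ (Lipschitz) are small. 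This contradicts $\|u_k - w\|_{L^\infty(\V_{1/2}^k)} > \ve_0$ for large $k$.

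The main obstacle is the bookkeeping for the varying domains $\V_s^k$. Two points require care: first, that the boundary H\"older estimate of Proposition \ref{hol} holds along the sequence with constants independent of $k$, which comes down to verifying that the uniform positive density condition from \cite{D} can be extracted quantitatively from the $C^{1,\alpha}$ smallness of $h_k$; and second, that the $C^2$-regularity of $u_\infty$ on the flat half-space $\V^\infty$ can be transferred into an approximation statement on the curved $\V_{1/2}^k$, which is where one exploits both $\delta_k \to 0$ and the fact that $u_\infty$ extends $C^2$-smoothly across $\{x_m = 0\}$ by Theorem \ref{KNc}.
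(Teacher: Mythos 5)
Your proposal follows essentially the same Caffarelli-style contradiction/compactness skeleton as the paper: same use of Proposition \ref{hol} for uniform H\"older bounds, same Caccioppoli/weak-limit argument identifying the constant-coefficient half-space problem, and same invocation of Theorem \ref{KNc} for the $C^2$ bound. The one place where the paper is more careful, and where your sketch has a small gap, is the handling of the varying domains and the construction of the competitor $w$. Since $\|h_k\|_{C^{1,\alpha}} \le \delta_k$ but $h_k$ may be negative, the sets $\V^k_{1/2}$ can dip below $\{x_m = 0\}$, so the competitor must be $C^2$ with controlled norm on a genuinely two-sided neighborhood of $\{x_m = 0\}$. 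Theorem \ref{KNc} only gives $C^2$ regularity of $U_0$ up to the flat boundary from inside; it does not by itself give a $C^2$ extension across it, contrary to the phrasing ``$u_\infty$ extends $C^2$-smoothly across $\{x_m=0\}$ by Theorem \ref{KNc}.'' The paper resolves this (and simultaneously sidesteps the varying-domain issue) by first straightening each $\mathscr S^k_1$ via $\Phi_k$ and extending $u_k$ across $\{\tilde x_m = 0\}$ by the three-term reflection with the Lions--Magenes coefficients $c_1,c_2,c_3$ of \eqref{constants}; this both places all $U_k$ on a fixed ball so Arzel\`a--Ascoli applies directly, and guarantees that the limit $U_0$ has matching derivatives up to second order across $\{x_m=0\}$, so $w = U_0$ is automatically $C^2$ on all of $B(1/2)$. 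Your compacta-plus-boundary-strip argument is sound as far as it goes, but you would still need to supply an explicit extension device (reflection, Whitney, etc.) to make the competitor well defined where $\V^k_{1/2}$ crosses below $\{x_m=0\}$.
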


\begin{proof}

We argue by contradiction and assume the existence of $\ve_0>0$ such that for every  $k\in\mathbb{N}$ we can find:
\begin{itemize}
\item a matrix-valued function $\mathbb A^k = [a^k_{ij}]$ with continuous entries in $\G$ and satisfying \eqref{ea0},
\item a domain $\Om_k$ and, with $\V^k_1 = \Om_k\cap B(1)$ and $\mathscr S^k_1 = \pa \Om_k \cap B(1)$,
\item a solution $u_k$ to the problem
\begin{equation}\label{cacc}
\sum_{i,j=1}^m X_i^\star (a^k_{ij}X_j u_k)  = \sum_{i=1}^m X_i^\star f^k_i + g_k\ \ \ \text{in}\ \V^k_1,\ \ \ \ \ u_k  = \phi_k\ \ \ \text{on}\ \mathscr S^k_1,
\end{equation}
\end{itemize}
for which we have for any $k\in \mathbb N$:
\begin{equation}\label{uk}
\|u_k\|_{L^{\infty}(\V^k_1)}\leq 1,
\end{equation}
\begin{equation}\label{dilf11}
\|h_k\|_{C^{1,\alpha}},||a^k_{ij}-a^{0}_{ij}||_{L^{\infty}(\V^k_1)},||\phi_k||_{\Tau^{0,\alpha}(\mathscr S_1^k)},||f^k_{i}||_{L^{\infty}(\V^k_1)}, ||g_k||_{L^{\infty}(\V^k_1)} \leq \frac 1k,
\end{equation}
but for every $w\in C^2(\overline{\V^k_{1/2}})$ with  $\|w\|_{C^2(\overline{\V^k_{1/2}})} \le C C^\star$ we have
\begin{equation}\label{contr}
\|u_k-w\|_{L^{\infty}(\V^k_{1/2})}\geq \ve_0.
\end{equation}
The functions $h_k\in C^{1,\alpha}$ in \eqref{dilf11} are such that $\V^k_1$ is described in logarithmic coordinates by $\{(x,y)\mid x_m> h_k(x',y)\}$. We will show that the validity of the inequality \eqref{contr} leads to a contradiction.

Our first step is to observe that, because of the uniform bounds \eqref{uk} and \eqref{dilf11}, Proposition \ref{hol} implies the existence of constants $C, \beta>0$, depending exclusively on $\lambda, \alpha$, but not on $k$, such that
\[
\|u_k\|_{\Tau^{{0,\beta}}(\V^k_{4/5})}\leq C.
\]
Since the $u_k$'s are defined on varying domains $\V^k_1$, we need to work with functions defined on the same domain. To accomplish this, we extend each $u_k$ to a function $U_k: B(4/5)\to \R$ such that $U_k\equiv u_k$ in $\V^k_{4/5}$ and
\begin{equation}\label{ext}
\|U_k\|_{\Tau^{{0,\beta}}(B(4/5))}\leq C'\ \|u_k\|_{\Tau^{{0,\beta}}(\V^k_{4/5})}\leq C_1,
\end{equation}
for some $C_1 = C_1(\lambda,\alpha)>0$. This can be done as follows. Let $\tilde{p}=\Phi_k(p)$ be the $C^{1,\alpha}$ local diffeomorphism that straightens the portion $\mathscr S^k_1$ of $\pa \Om_k$. More precisely, $\Phi_k$ can be locally expressed in logarithmic coordinates as
\[
\Phi_k(x,y)= (x',x_m - h_k(x',y),y).
\]
We set  $v_k(\tilde{p}) = u_k\circ\Phi^{-1}_k(\tilde{p})$ and we denote by $\left(\tilde{x}',\tilde{x}_m,\tilde{y}\right)$ the logarithmic  coordinates of $\tilde{p}$. The function $v_k$ is now defined for $\tilde x_m \ge 0$. Then, we define the extension of $v_k$ to the region  $\{\tilde{x}_m<0\}$ by the classical method of extension by reflection in the following way
\begin{equation*}
V_k(\tilde{x}',\tilde{x}_m,\tilde{y})=
\begin{cases}
v_k(\tilde{x}',\tilde{x}_m,\tilde{y})\ \ \ \ \ \ \ \ \ \ \ \ \ \ \ \ \tilde x_m \ge 0,
\\
\sum^3_{i=1}c_iv_k(\tilde{x}',-\frac{\tilde{x}_m}{i},\tilde{y})\ \quad \tilde{x}_m<0,
\end{cases}
\end{equation*}
where the constants $c_1,c_2$ and $c_3$ are determined by the system of equations
\begin{equation}
\label{constants}
\sum^3_{i=1}c_i(-1/i)^m=1,\quad m=0,1,2,
\end{equation}
see e.g. p. 14 in \cite{LM}.
Having constructed $V_k$ we now define the extension $U_k$ of $u_k$ by setting $U_k=V_k\circ \Phi_k$. 

It should be clear that the sequence $\{U_k\}_{k\in \mathbb N}$ satisfies \eqref{ext}. Therefore, by the theorem of Ascoli-Arzel\`{a} we obtain a subsequence, that we still denote by $\{U_k\}_{k\in \mathbb N}$, that converges uniformly to a function $U_0\in \Tau^{{0,\beta}}(B(4/5))$. Clearly, $U_0$ satisfies
\begin{equation}\label{extw}
U_0(x',x_m,y)=
\begin{cases}
U_0(x',x_m,y)\ \ \ \ \ \ \ \ \ \ \ \ \ \ \ \ \ \ \ \ \ \ \ \ \ \ x_m\geq 0,
\\
\sum^3_{i=1} c_i U_0(x',-x_m/i,y)\ \ \ \ \ \ \ \ \ \ \ x_m< 0,
\end{cases}
\end{equation}
where the constants $c_1,c_2$ and $c_3$ are given by the system (\ref{constants}). Moreover, since by \eqref{dilf11} we have  $||\phi_k||_{\Tau^{0,1}(\V^k_1)} \leq \frac{1}{k}$ for every $k$, we  can assert that
\begin{equation}\label{zero}
U_0(x', 0, y)=0.
\end{equation}
Our next objective is to show that $U_0 \in \mathscr{L}^{1,2}_{loc}(B(4/5) \cap \{x_m>0\}) \cap C(\overline{B(4/5) \cap\{x_m>0\}})$, and moreover $U_0$ is a weak solution to the problem
\begin{equation}\label{weq}
\sum_{i,j=1}^m a^{0}_{ij}X_iX_j U_0 = 0\ \ \text{in}\ B(4/5) \cap \{x_m>0\},\ \ \ \ \ 
 U_0=0\ \  \text{on}\ B(4/5)\cap \{x_m=0\}. 
\end{equation}

If we can prove this, then since $[a^0_{ij}]$ is a constant coefficient matrix, and the portion $B(4/5)\cap \{x_m=0\}$ of the boundary of $B(4/5) \cap\{x_m>0\}$ is non-characteristic and $C^\infty$, we can apply  Theorem \ref{KNc} to conclude that  
\[
\|U_0\|_{C^2(\overline{B(1/2) \cap \{x_m>0\}})}\leq C^\star
\]
for some universal $C^\star>0$. Moreover, from  the expression of $U_0$ in \eqref{extw} we can also assert that the second derivatives in $x_m$ are continuous across $x_m = 0$, and thus in fact $U_0 \in C^2(B(1/2))$, and 
\[
\|U_0\|_{C^2(\overline{\V^k_{1/2}})} \leq \|U_0\|_{C^2(B(1/2))}\leq C C^\star,
\]
where $C>0$ is a dimensional constant. This shows that $w = U_0$ is an admissible candidate for the estimate \eqref{contr} above, and we thus have for $k\in \mathbb N$
\[
0 < \ve_0 \le \|u_k- U_0\|_{L^{\infty}(\V^k_{1/2})}.
\]
Since $u_k \to U_0$ uniformly, this is obviously a contradiction for large enough $k$'s.

In order to complete the proof of the lemma, we are thus left with establishing \eqref{weq}. To see this observe that because $\|h_k\|_{C^{1,\alpha}}\le 1/k\to 0$,  given $p \in B(4/5) \cap \{x_m>0\}$  there exist $\eta>0$ and $k_0(p)\in \mathbb N$ such that for all $k\geq k_0(p)$ we have $B(p,2\eta)\subset \V^k_1$.
In view of the uniform bounds in \eqref{uk}, \eqref{dilf11}, by the Caccioppoli inequality for the problem \eqref{cacc} we obtain for all $k\geq k_0(p)$
\begin{equation}\label{ue}
\int_{B(p,\eta)}|\nabla_\cH u_k|^2 \leq C.
\end{equation}
for some $C(\lambda,\eta)>0$ but independent of $k$. Therefore, $\{u_k\}_{k\in \mathbb N}$ has a subsequence, which we still denote by $\{u_k\}_{k\in \mathbb N}$, such that
\[
u_k\rightarrow w\ \ \text{weakly in}\ \mathscr{L}^{1,2}(B(p,\eta)),\ \  \text{and}\ \ \ 
u_k\rightarrow w\ \  \text{strongly in} \ \ L^2(B(p,\eta)).
\]

Since $\{U_k\}_{k\in \mathbb N}$ converges to $U_0$ uniformly, by uniqueness of limits we can assert that $w=U_0$ in $B(p, \eta)$. Moreover, using the uniform energy estimate for the $u_k$'s in \eqref{ue} and \eqref{dilf11} it follows  by standard weak type arguments that $U_0$ is a weak solution to  
\[
\sum_{i,j=1}^m a^0_{ij}X_i X_j U_0=0
\]
in  $B(p, \eta)$, and hence a classical solution by H\"ormander's hypoellipticity theorem in \cite{H}. By the arbitrariness of $p \in B(4/5) \cap \{x_m>0\}$ and by \eqref{zero}, we conclude that \eqref{weq} holds.

\end{proof} 
 
\medskip

With Lemma \ref{compactness}  in hands, we now proceed to proving Theorem \ref{main}.

\begin{proof}[Proof of Theorem \ref{main}] We divide it into four steps.
\begin{itemize}
\item[Step $1$:] preliminary reductions;
\item[Step $2$:] existence of the first-order Taylor polynomial at every boundary point of $\mathscr S_{1/2}$;
\item[Step $3$:] H\"older continuity of the horizontal gradient on the boundary;
\item[Step $4$:] H\"older continuity of the horizontal gradient up to the boundary.
\end{itemize}

\vskip 0.2in

\noindent \textbf{Step $1$.} Consider the function $\tilde u= u -\phi$. We note that $\tilde u$ is a weak solution of the problem
\begin{equation}\label{new}
\sum_{i,j=1}^m X_i^\star(a_{ij} X_j \tilde u) = \sum_{i=1}^m X_i^\star  \tilde f_i + g\ \text{in}\ \V_s,\ \ \ \ \tilde u=0\ \text{on}\ \mathscr S_s,
\end{equation}
where $\tilde f_i = f_i - \sum_{j=1}^m a_{ij} X_j \phi$, which again  belongs to $\Tau^{0,\alpha}(\V_s)$ in view of our assumptions on $a_{ij}, f_i$ and $\phi$. Therefore, we can assume without loss of generality that $\phi=0$ in Theorem \ref{main}.

By a left-translation we may assume that $p_0=e$ in the hypothesis of Theorem \ref{main}. Then, by scaling with respect to the family of dilations $\{\delta_{\lambda} \}_{\lambda>0}$ in \eqref{dilg}, and an appropriate  rotation of the horizontal layer $\g_1$, we may also assume without loss of generality that:
\begin{itemize}
\item $s=1$,
\item $p_0 = e$, and that
\item in the logarithmic coordinates
 $\V_1 = \Om \cap B(1)$ can be expressed as 
\begin{equation}\label{omega}
\Big\{(x',x_m,y)\mid x_m>h(x',y)\Big\},
\end{equation}
with $h(0,0)=0$,  $\nabla_{x'}h(0,0)=0$ and with $\|h\|_{C^{1,\alpha}} \leq 1$.
\end{itemize}

We denote with $\mathbb{A}(e)= \mathbb{A}^{0}= [a^{0}_{ij}]$. 
For every $0<\s \le 1$ consider now the domain $\Om_\s = \delta_{\s^{-1}}(\Om)$. In the logarithmic coordinates $\Om_\s$ is given by 
\begin{equation}\label{an1}
\Om_{\s}=\left\{(x',x_m,y_2,\ldots,y_m)\mid (\s x',\s x_m,\s^{2}y_2,\ldots,\s^{r} y_r)\in \Om\right\}.
\end{equation}
Observe that $\partial\Om_\s$ is given by 
\begin{equation}\label{xms}
x_m=h_{\s}(x',y)=h_{\s}(x',y_2,\ldots,y_m)\overset{def}{=}\frac 1\s h(\s x',\s^{2}y_2,\ldots,\s^{r} y_r).
\end{equation}
We set
\[
\U_\s = \Om_\s \cap B(\s^{-1}),\ \ \ \ \mathscr T_\s = \pa \Om_\s \cap B(\s^{-1}).
\]
Note that we have 
\begin{equation}\label{sm11}
\begin{cases}
\nabla_{x'} h_{\s}(x', y)= \nabla_{x'} h(\s x', \s^2 y_2, ....\s^r y_r),
\\
\nabla_{y_j} h_{\s}(x',y)= \s^{j-1} \nabla_{y_j} h(\s x',\s^2 y_2, ... , \s^r y_r)\ \ \ \ \  j=2,...,r,
\end{cases}
\end{equation}
and thus $\nabla h_\s(x',y)\to (\nabla_{x'} h(0,0),0)$ as $\s\to 0$. Taylor's theorem thus gives as $\s\rightarrow 0$
\begin{equation}
h_{\s}(x',y)\rightarrow \langle \nabla_{x'}h(0,0),x'\rangle=0.
\end{equation}
This implies that as $\s \to 0$
\[
\partial \Om_\s \cap B(1)\  \longrightarrow\  \{x_m=0\} \cap B(1).
\]
If we indicate with $[F]_{C^{{0,\alpha}}(U)}$  the  Euclidean H\"older seminorm of a function $F$ over a set $U$, then \eqref{sm11} implies that
\begin{align}\label{hs}
[\nabla h_\s]_{C^{{0,\alpha}}(\Om_\s\cap B(1))} & = \underset{\underset{(x',y), (\bar{x}',\bar{y})\in \Om_\s\cap B(1))}{(x',y)\not= (\bar{x}',\bar{y})}}{\sup} \frac{|\nabla h_{\s}(x',y)-\nabla h_{\s}(\bar{x}',\bar{y})|}{\left(|x-\bar{x}'|^2+|y-\bar{y}|^2\right)^{\alpha/2}}
\\
& \leq \s^{\alpha} [\nabla h]_{C^{{0,\alpha}}(\Om\cap B(1))}.
\notag
\end{align}

Similarly to the proof of Corollary \ref{intr} we now see that the rescaled function $u_{\s}(p)= u(\delta_\s p)$ solves the following problem 
\[
\sum_{i,j=1}^m X_i^\star(a_{ij,\s} X_j u_\s)= \sum_{i = 1}^m X_i^\star f_{i,\s} + g_\s\ \ \text{in}\ \U_\s,\ \ \ \ \ u_\s= \phi_\s\ \ \text{on}\ \mathscr T_\s,
\]
where
\[
a_{ij,\s}(p)= a_{ij}(\delta_\s p),\ \ \ \ f_{i,\s}(p)= \s f_i(\delta_\s p),\ \ \  \ g_\s= \s^2 g(\delta_\s p),\ \ \ \phi_\s(p) = \phi(\delta_\s p).
\]
We have  
\begin{equation}\label{cs}
\begin{cases}
[a_{ij,\s}]_{\Tau^{0,\alpha}(\U_\s)} \leq \s^{\alpha} [a_{ij}]_{\Tau^{0, \alpha}(\U_1)},
\\
||\nabla_\cH \phi_\s||_{L^{\infty}(\U_\s)} \leq \s ||\nabla_\cH \phi||_{L^{\infty}(\U_1)},\ \ \ \ [\nabla_\cH \phi_\s]_{\Tau^{0,\alpha}(\U_\s)}\leq  \s^{1+\alpha}[\nabla_\cH \phi]_{\Tau^{0,\alpha}(\U_1)},
\\
||f_{i,\s}||_{L^{\infty}(\U_\s)} \leq \s ||f_i||_{L^{\infty}(\U_1)},\ \ \ \ [f_{i,\s}]_{\Tau^{0, \alpha}(\U_\s)} \leq \s^{1+\alpha} [f_i]_{\Tau^{0, \alpha}(\U_1)},
\\
||g_\s||_{L^{\infty}(\U_\s)} \leq \s^2 ||g||_{L^{\infty}(\U_1)}.
\end{cases}
\end{equation}
From the estimates \eqref{hs} and \eqref{cs} we conclude that if for a given $\tilde \delta>0$ the scaling parameter $\s\in (0,1]$ is chosen sufficiently small, then upon substituting $\Om, a_{ij}, f_i, g, \phi$ and $u$ with $\Om_\s, a_{ij,\s}, f_{i,\s}, g_\s, \phi_\s$ and $u_\s$, we can ensure that
\begin{equation}\label{dilf}
\|h\|_{C^{1,\alpha}},\ \ [a_{ij}]_{\Tau^{0, \alpha}},\ \ ||\nabla_\cH \phi||_{L^{\infty}},\ \ [\nabla_\cH \phi]_{\Tau^{0, \alpha} },\ \ ||f_i||_{L^{\infty}},\ \ [f_i]_{\Tau^{0, \alpha}},\ \ ||g||_{L^{\infty}} \leq \tilde \delta.
\end{equation}
This reduction to small data will be important in the next step when we will want to implement Lemma \ref{compactness}.

\vskip 0.2in

\noindent \textbf{Step 2.} 
In this step we intend to show that $u$ is $\Tau^{1,\alpha}(\bar p)$ at  every $ \bar p \in \mathscr S_{1/2}$. This will be accomplished by proving that for every $\bar p\in \mathscr S_{1/2}$ there exists $P_{\bar p} \in \mathscr{P}_1$ such that for some universal constant $C_0>0$ independent of $\bar p$, one has
\begin{equation}\label{t100}
\underset{p\in \V_1}{\sup}\ \frac{|u(p) - P_{\bar p}(p)|}{d(p, \bar p)^{1+\alpha}} \leq C_0 ,\ \ \ \ \ \ ||P_{\bar p}||_{L^{\infty}(\V_1)} \leq C_0.
\end{equation}
By a left-translation it suffices to establish \eqref{t100} for $\bar p=e$. Also, by possibly normalizing the solution, we can assume without restriction that
\begin{equation}\label{indu}
||u||_{L^{\infty}(\V_1)} \le 1.
\end{equation}
Denote by $\kappa = C C^\star >0$ the constant in Lemma \ref{compactness}, and fix $\rho>0$ such that
\begin{equation}\label{rhocons}
0<\rho<(4\kappa)^{- \frac{1}{1-\alpha}}.
\end{equation}
We also let 
\begin{equation}\label{let}
\varepsilon=\frac{\rho^{1+\alpha}}{2}.
\end{equation}
Corresponding to such choice of $\ve$ we let $\delta = \delta(\ve)>0$ be the number whose existence is claimed in Lemma \ref{compactness} and let $\tilde \delta\in (0,\delta)$ be another number whose precise choice will be fixed later. As we have seen in Step 1, by appropriately choosing the scaling parameter $\s>0$, we may assume that with such $\tilde \delta$ the smallness conditions \eqref{dilf} be in force.

We first prove inductively that there exists a sequence of polynomials $\{P_\ell\}_{\ell\in \mathbb N\cup\{-1,0\}}$ in $\mathscr{P}_1$, such that for every $\ell\in \mathbb N\cup\{-1,0\}$ one has:
\begin{equation}\label{cond1}
\|u-P_\ell\|_{L^{\infty}(\Om\cap B(\rho^\ell))}\leq \rho^{\ell(1+\alpha)},\ \  \ |P_\ell| \leq C_1 \kappa,
\end{equation}
\begin{equation}
\label{cond2}
\|P_{\ell}-P_{\ell-1}\|_{L^{\infty}(B(\rho^{\ell-1}))}\leq C \rho^{(\ell-1)(1+\alpha)}, 
\end{equation}
\begin{equation}\label{cond3}
\|P_\ell\circ \delta_{\rho^\ell}\|_{\Tau^{0,1}(\pa\Om_{\rho^\ell} \cap B(1))}\leq \delta \rho^{\ell(1+\alpha)},
\end{equation}
where we recall that $\Om_\s = \delta_{\s^{-1}}(\Om)$, see \eqref{an1} above. To avoid any confusion we mention explicitly that $\delta = \delta(\ve)$ as above in the right-hand side of \eqref{cond3}, and that such number should not be confused with the non-isotropic dilations $\delta_\lambda$.
Once the validity of \eqref{cond1}, \eqref{cond2} and \eqref{cond3} is established for all $\ell$, then we can infer that  there exists $P_{e} \in \mathscr{P}_1$ such that 
\eqref{t100} holds at $\bar p=e$. This follows in a standard way  by a  real analysis argument as in the Euclidean case in \cite{Ca}  (see also \cite{CH} as well as the proof of Theorem 3.6 in \cite{GL} for adaptations to the setting of Carnot groups).   

We construct the sequence in the form of monomials $P_\ell =b_\ell x_m$ using induction. We set $b_{-1} = b_0 = 0$, which of course gives $P_{-1} = P_0 \equiv 0$.
If $\ell=0$, then by \eqref{indu} we see  that \eqref{cond1} is fulfilled, and so are \eqref{cond2}, \eqref{cond3} in a trivial way. Suppose now that, given $k \in \mathbb{N}$, the polynomials $P_1,...,P_k$ have been selected in such a way that $P_\ell =b_\ell x_m$ and conditions (\ref{cond1})-(\ref{cond3}) hold for all $\ell \leq k$. We want to prove they there exists $P_{k+1} = b_{k+1} x_m$ such that (\ref{cond1})-(\ref{cond3}) continue to hold for $\ell=k+1$. To construct $P_{k+1}$ we are going to resort to Lemma \ref{compactness}.

\medskip

Consider the following rescaled function
\begin{equation}\label{rescaled}
v(p) \overset{def}{=} \frac{\left(u-P_k\right)(\delta_{\rho^k}(p))}{\rho^{k(1+\alpha)}},\quad p\in  \tilde \Om\cap B(1). 
\end{equation}
where $\tilde \Om= \Om_{\rho^{k}}$. We immediately observe that the validity of (\ref{cond1}) for $\ell=k$ implies by rescaling that  
\begin{equation}\label{comp1}
\|v\|_{L^{\infty}(\tilde \Om\cap B(1))}\leq 1.
\end{equation}
 If we define 
\begin{equation*}
\tilde a_{ij}(p)= a_{ij}(\delta_{\rho^k} p),
\ \ \ 
\tilde f_i(p)= f_i(\delta_{\rho^k} p),\ \ \ \tilde g(p)= g (\delta_{\rho^k}p),\ \ \ \tilde P_k(p) = P_k(\delta_{\rho^k} p),
\end{equation*}
and we further let
\begin{equation}\label{comp2}
\tilde F_i = \frac{\tilde f_i - \sum_{j=1}^m \tilde a_{ij} X_j \tilde P_k}{\rho^{k\alpha}},\ \ \ \ \tilde G = \rho^{k(1-\alpha)} \tilde g,\ \ \ \ \tilde \Phi = - \rho^{-k(1+\alpha)} \tilde P_k,
\end{equation}
recalling that from Step 1 we are assuming that $\phi = 0$, we see that $v$ solves the following problem 
\begin{equation}\label{n1}
\begin{cases}
\sum_{i,j=1}^m X_i^\star (\tilde a_{ij} X_j v)= \sum_{i=1}^m X_i^\star \tilde F_i + \tilde G\ \ \ \ \text{in}\ \tilde \Om \cap B(1), 
\\
v\big|_{\pa \tilde \Om\cap B(1)}= \tilde \Phi.
\end{cases}
\end{equation}
Observe now that, since $P_k$ is of degree $1$, we have 
\[
\sum_{i, j=1}^m X_i^\star (\tilde a^{0}_{ij} X_j \tilde P_k) = 0,
\]
and therefore   
\[
X_i^\star \tilde f_i= X_i^\star (\tilde f_i - \tilde f_i(e)),\ \  \  \ \sum_{i, j=1}^m X_i^\star( \tilde a_{ij} X_j \tilde P_k)= \sum_{i, j=1}^m  X_{i}^\star \left((\tilde a_{ij}- \tilde a^{0}_{ij}) X_j \tilde P_k\right).
\]
From this observation and \eqref{n1} we conclude that $v$ also solves the problem 
\begin{equation}\label{n2}
\begin{cases}
\sum_{i,j=1}^m X_i^\star (\tilde a_{ij} X_j v)= \sum_{i=1}^m X_i^\star \tilde{\mathscr F}_i + \tilde G\ \ \ \ \text{in}\ \tilde \Om \cap B(1), 
\\
v\big|_{\pa \tilde \Om\cap B(1)}= \tilde \Phi,
\end{cases}
\end{equation}
where
\[
\tilde{\mathscr F}_i = \frac{\tilde f_i - \tilde f_i(e) - \sum_{j=1}^m (\tilde a_{ij} - \tilde a_{ij}^0) X_j \tilde P_k}{\rho^{k\alpha}}.
\]

We next want to verify that the hypothesis of Lemma \ref{compactness} are satisfied for the problem \eqref{n2} corresponding to our choice of $\ve$ in \eqref{let}. Since we have already observed that \eqref{comp1} holds for $v$, we are left with checking that \eqref{dilf1} hold for the functions
\[
\tilde h,\ \ \ \tilde a_{ij} - \tilde a_{ij}^0,\ \ \ \tilde \Phi,\ \ \ \tilde{\mathscr F}_i,\ \ \ \tilde G,
\]
where with the notation of \eqref{an1}, \eqref{xms}, we are using the letter $\tilde h = h_{\rho^{k}}$ for the function that locally describes the boundary of $\tilde \Om = \Om_{\rho^{k}} = \delta_{\rho^{-k}}(\Om)$. We note that we presently have $\tilde a_{ij}^0 = \tilde a_{ij}(e) = a_{ij}(\delta_{\rho^k} e) = a_{ij}(e) = a_{ij}^0.$ Now, since $\rho<1$, we have 
\begin{equation}\label{1}
||\tilde a_{ij} - \tilde a^{0}_{ij}||_{L^{\infty}(\tilde \Om \cap B(1))} = ||a_{ij} -  a^{0}_{ij}||_{L^{\infty}(\Om \cap B(\rho^{k}))} \le \rho^{\alpha k}\ [a_{ij}]_{\Gamma^{0,\alpha}(\Om \cap B(\rho^{k}))}  \le \tilde \delta,  
\end{equation}
where in the last inequality we have used \eqref{dilf}. Similarly, we have
\begin{equation}\label{2}
||\tilde G||_{L^{\infty}(\tilde \Om \cap B(1))} = \rho^{k(1-\alpha)} ||g||_{L^{\infty}(\Om \cap B(\rho^{k}))} \le \tilde \delta.
\end{equation}
Also  note that $\pa \tilde  \Om \cap B(1)$ can be expressed as
\begin{equation}\label{hsigma}
x_m=h_{\s}(x',y_2,\ldots,y_m)=\frac{h(\s x',\s^{2}y_2,\ldots,\s^{r}y_r)}{\s}.
\end{equation}
where $\s=\rho^{k}$. Since $\rho < 1$, by \eqref{hs} we find
\[
[\nabla \tilde h]_{C^{{0,\alpha}}(\tilde \Om\cap B(1))} = [\nabla h_\s]_{C^{{0,\alpha}}(\Om_\s\cap B(1))} \leq \s^{\alpha} [\nabla h]_{C^{{0,\alpha}}(\Om\cap B(1))} \leq \tilde \delta.
\]
Keeping in mind that $h(0,0) = 0$, this estimate easily implies
\begin{equation}\label{3}
\|\tilde h\|_{C^{1,\alpha}} \le \tilde \delta.
\end{equation}
Moreover, since 
\[
\tilde \Phi = v\big|_{\pa \tilde \Om\cap B(1)}=-\frac{P_k\circ\delta_{\rho^k}}{\rho^{k(1+\alpha)}},
\] 
we immediately obtain from \eqref{cond3} 
\begin{equation}\label{4}
\|\tilde \Phi\|_{\Tau^{0,\alpha}(\pa \tilde \Om\cap B(1))} \le \|\tilde \Phi\|_{\Tau^{0,1}(\pa \tilde \Om\cap B(1))}\leq \delta.
\end{equation}
Finally, 
\begin{align*}
||\tilde{\mathscr F}_{i}||_{L^{\infty}(\tilde \Om \cap B(1))} & = \rho^{-k\alpha} \|\tilde f_i - \tilde f_i(e) - \sum_{j=1}^m (\tilde a_{ij} - \tilde a_{ij}^0) X_j \tilde P_k\|_{L^{\infty}(\tilde \Om \cap B(1))}
\\
& \le \rho^{-k\alpha} \left\{\|\tilde f_i - \tilde f_i(e)\|_{L^{\infty}(\tilde \Om \cap B(1))} + \sum_{j=1}^m ||\tilde a_{ij} - \tilde a_{ij}^0||_{L^{\infty}(\tilde \Om \cap B(1))}\right\}
\\
& \le \rho^{-k\alpha} \left\{\|f_i - f_i(e)\|_{L^{\infty}(\Om \cap B(\rho^k))} + \sum_{j=1}^m ||a_{ij} - a_{ij}^0||_{L^{\infty}(\Om \cap B(\rho^k))}|X_j \tilde P_k|\right\}
\\
& \le [f_{i}]_{\Gamma^{0,\alpha}(\Om \cap B(\rho^{k}))} + \rho^k  [a_{im}]_{\Gamma^{0,\alpha}(\Om \cap B(\rho^{k}))}  |b_k| \le (1+ C_1 \kappa) \tilde \delta,
\end{align*}
where in the last inequality we have used \eqref{dilf} and  \eqref{cond1}.
If we now choose  
\[
\tilde \delta < \frac{\delta}{1+ C_1 \kappa},
\]
we conclude that
\begin{equation}\label{5}
||\tilde{\mathscr F}_{i}||_{L^{\infty}(\tilde \Om \cap B(1))} \le \delta.
\end{equation}

Combining \eqref{1}-\eqref{5}, we have finally proved that $v$ is a solution to the problem \eqref{n2} with $\tilde h, \tilde a_{ij} - \tilde a_{ij}^0, \tilde \Phi, \tilde{\mathscr F}_i, \tilde G$ satisfying \eqref{dilf1}. From  Lemma \ref{compactness} we can thus assert the existence of a function $w$ that solves (\ref{weq}), with $\|w\|_{C^2(B(1/2))}\leq \kappa$, and such that
\begin{equation}\label{wcomp}
\|v-w\|_{L^{\infty}( \tilde \Om\cap B(1/2))}\leq \ve.
\end{equation}
Moreover, since $w\in C^{2}$ and $w = 0$ on $B(4/5)\cap \{x_m=0\}$, by Taylor's formula and the fact that $\|w\|_{C^2(B(1/2))}\leq \kappa$ there exists $b \in \R$ with $|b|\leq \kappa$  such that
\begin{equation}\label{rho}
\|w-bx_m\|_{L^{\infty}(B(\rho))}\leq \kappa\rho^2 < \frac{\rho^{1+\alpha}}{4},
\end{equation}
where the last inequality in \eqref{rho} follows from our choice of $\rho$ in \eqref{rhocons}. From the triangle inequality, \eqref{wcomp}, \eqref{rho} and \eqref{let} we conclude that 
\begin{equation}\label{compv}
\|v-bx_m\|_{L^{\infty}(\tilde \Om\cap B(\rho))}\leq \rho^{1+\alpha}.
\end{equation}
If we now define the polynomial $Q(p) = b x_m\in \mathscr P_1$, and we keep in mind the definition \eqref{rescaled} of $v$, then we have
\begin{align}\label{QQ}
||v - Q||_{L^{\infty}(\tilde \Om\cap B(\rho))} & = \underset{p\in \tilde \Om\cap B(\rho)}{\sup} \left|\frac{\left(u-P_k\right)(\delta_{\rho^k}(p))}{\rho^{k(1+\alpha)}} - Q(p)\right|
\\
& = \rho^{-k(1+\alpha)} ||u -P_{k+1}||_{L^\infty(\Om\cap B(\rho^{k+1}))},
\notag
\end{align}
where we have let
\begin{equation}\label{pp}
P_{k+1}(p) \overset{def}{=} P_k(p)+\rho^{k(1+\alpha)} Q(\delta_{\rho^{-k}}(p)),\quad\ \ \  p\in \G.
\end{equation}
From \eqref{QQ} and \eqref{compv} we conclude that
\[
||u -P_{k+1}||_{L^\infty(\Om\cap B(\rho^{k+1}))} \le \rho^{(k+1)(1+\alpha)}.
\]
Therefore, the polynomial $P_{k+1}\in \mathscr P_1$ satisfies 
the first inequality in \eqref{cond1} for $\ell = k+1$. Furthermore, we obtain from \eqref{pp}
\[
||P_{k+1} - P_k||_{L^\infty(B(\rho^k))} = \rho^{k(1+\alpha)}  \underset{p\in B(\rho^k)}{\sup} |Q(\delta_{\rho^{-k}}(p))| = \rho^{k(1+\alpha)} ||Q||_{L^\infty(B(1))} 
\]
This proves that \eqref{cond2} is satisfied with $C = ||Q||_{L^\infty(B(1))}$. Moreover from the expression of $P_{k+1}$ in terms of $P_k$  as in \eqref{pp}, we can infer by induction that in logarithmic coordinates the polynomials $P_k$ are of the form
\begin{equation}\label{poli}
P_k(p)= b_k x_m,
\end{equation}
where
\[
|b_k| \leq \sum_{\ell = 0}^{k} \kappa  \rho^{\ell \alpha} \leq \sum_{\ell =0}^{\infty} \kappa  \rho^{\ell \alpha} \leq C_1 \kappa,
\]
where $C_1>0$ is a constant depending only on $\rho$ and $\alpha$. Therefore,  the second inequality in \eqref{cond1} also holds. It only remains to verify the condition \eqref{cond3} for $\ell = k+1$. Keeping \eqref{poli} in mind, we take $p, \bar p \in  \partial  \tilde  \Om \cap B(1)$, where $\tilde \Om = \delta_{\rho^{-(k+1)}}\Om$. Let $(x,y)$ and $(\bar{x}, \bar{y})$ respectively denote the logarithmic  coordinates of $p$ and $\bar{p}$. With $\s=\rho^{k+1}$ we have 
\[
x_m = \frac{h(\s x',\s^{2}y_2,\ldots,\s^{r}y_r)}{\s},\ \ \ \ \ \bar x_m = \frac{h(\s \bar{x}',\s^{2}\bar y_2,\ldots,\s^{r} \bar y_r)}{\s}.
\]
This gives
\begin{align}\label{sm111}
& |P_{k+1}(\delta_{\rho^{k+1}} p)- P_{k+1}(\delta_{\rho^{k+1}} \bar{p})|  = |b_{k+1}|\ |\sigma x_m - \sigma \bar x_m|
\\
& = |b_{k+1}| |h(\s x',\s^{2}y_2,\ldots,\s^{r}y_r) - h(\s \bar{x}',\s^{2}\bar y_2,\ldots,\s^{r} \bar y_r)|
\notag\\
&\leq C_1\kappa |h(\rho^{k+1}x', \rho^{2(k+1)}y_1, ...,\rho^{r(k+1)}y_r)- h(\rho^{k+1}\bar{x}', \rho^{2(k+1)}\bar{y_1}, ...,\rho^{r(k+1)}\bar{y_r})|. 
\notag
\end{align}
The triangle inequality gives
\begin{align}\label{sm1}
&|h(\rho^{k+1} x', \rho^{2(k+1)}y_2,\ldots,\rho^{r(k+1)}y_r)-h(\rho^{k+1} \bar{x}',\rho^{2(k+1)}\bar{y}_2,\ldots,\rho^{r(k+1)}r\bar{y}_r)| 
\\
& \leq  |h(\rho^{k+1} x', \rho^{2(k+1)}y_2,\ldots,\rho^{r(k+1)}y_r)-h(\rho^{k+1} \bar{x}',\rho^{2(k+1)}y_2,\ldots,\rho^{r(k+1)} y_r)| 
\notag
\\
&+|h(\rho^{k+1} \bar{x}', \rho^{2(k+1)}y_2,\ldots,\rho^{r(k+1)}y_r)-h(\rho^{k+1} \bar{x}',\rho^{2(k+1)}\bar{y}_2,\ldots,\rho^{r(k+1)}\bar{y}_r)|.
\notag
\end{align}
Since
\[
|| h||_{C^{1,\alpha}} \leq \tilde \delta,\ \ \ 
h(0, 0)=0,\ \ \ \nabla_{x'}h(0, 0)=0,
\]
for any $t>0$ one has
\begin{equation}\label{sml}
||\nabla_{x'}h||_{L^{\infty}(B(t))}\leq  \tilde \delta t^{\alpha}.
\end{equation}
By Taylor's formula and \eqref{sml} the first term in the right-hand side of \eqref{sm1} can be estimated as
\begin{align}\label{sm2}
& |h(\rho^{k+1} x', \rho^{2(k+1)}y_2,\ldots,\rho^{r(k+1)}y_r)-h(\rho^{k+1} \bar{x}',\rho^{2(k+1)}y_2,\ldots,\rho^{r(k+1)} y_r)|  \\
& \leq \tilde \delta \rho^{(k+1)(1+\alpha)}|x'- \bar{x}'| \leq  C_2 \tilde \delta \rho^{(k+1)(1+\alpha)} d(p, \bar p),
\notag
\end{align}
where in the last inequality in \eqref{sm2} we have used the left-hand side of \eqref{xy} in Lemma \ref{L:dis}, combined with the fact that $|x'-\bar x'|\le d_\mathscr R(p,\bar p)$. In a similar way the second term in the right-hand side of \eqref{sm1} can be estimated in the following way
 \begin{equation}\label{sm4}
 |h(\rho^{k+1} \bar{x}', \rho^{2(k+1)}y_2,\ldots,\rho^{r(k+1)}y_r)-h(\rho^{k+1} \bar{x}',\rho^{2(k+1)}\bar{y}_2,\ldots,\rho^{r(k+1)}\bar{y}_r)|\leq  C_4 \tilde \delta \rho^{(k+1)(1+\alpha)} d(p,\bar p).
 \end{equation}
 In \eqref{sm4} we have used the mean-value theorem and the fact that, since $\rho<1$, we have $\rho^{s(k+1)} \leq \rho^{(k+1)(1+\alpha)}$ for all $s \geq 2$. If we finally let $C_5=\max\{C_2, C_4\}$, and choose
 \[
 \tilde \delta= \min \left\{\frac{\delta}{2 C_1C_5 \kappa}, \frac{\delta}{2m^2 C_1 \kappa }\right\}, 
 \]
we conclude that \eqref{sm11}, combined with \eqref{sm1}, \eqref{sm2}  and \eqref{sm4}, ensures that \eqref{cond3} holds. This completes the proof of the estimate \eqref{t100}. 
 
\vskip 0.2in

\noindent \textbf{Step 3.}  In our subsequent discussion given any boundary point $ p\in \mathscr S_{1/2}$ we will denote by $P_{p}$ the corresponding Taylor polynomial of $u$ at $p$ whose existence has been established in Step 2. Our main objective in this step is to show that,   given $p_1, p_2 \in \mathscr S_{1/2}$, the following estimate holds for some $K_0$ universal: 
\begin{equation}\label{bdcmp}
|\nabla_\cH P_{p_1} - \nabla_\cH P_{p_2}| \leq  K_0 d(p_1,p_2)^{\alpha}.
\end{equation}
Let $t= d(p_1,p_2)$. We consider a ``non-tangential" point $p_3 \in \V_1$ at a (pseudo-)distance from $p_1$ comparable to $t$,  i.e., let $p_3$ be such that
\begin{equation}\label{nont}
d(p_3, p_1) \sim t,\ d(p_3, \pa \Om)  \sim t,
\end{equation}
where we have let $d(p,\pa \Om)= \underset{p' \in \pa \Om}{\inf} \ d(p,p')$.
Since $\mathscr S_1$ is a non-characteristic $C^{1,\alpha}$ portion of $\pa \Om$, it is possible to find such a point $p_3$. Arguing as in the proof of Theorem 7.6 in \cite{DGP}, at any scale $t$ one can find a non-tangential pseudo-ball from inside centered at $p_3$ (see the proof of Proposition \ref{hol}, where a non-tangential pseudo-ball from outside was similarly determined). The non-tangentiality of $B(p_3,at)$ means that there exists a universal $a>0$ sufficiently small (which can be seen to depend on the Lipschitz character of $\pa \Om$ near the non-characteristic portion $\mathscr S_1$), such that for some $c_0$ universal one has for all $p \in B(p_3,a t)$ 
\[
d(p,\pa \Om) \geq  c_0 t.
\]
This allows us to apply Step 2 and conclude from \eqref{t100}
that there exists a universal $K_4>0$ such that for all $p$ in $B(p_3,a t)$
\begin{equation}\label{3.4}
|u(p)  - P_{p_1}(p)| \leq K_4 t^{1+\alpha},\ \ \ \ \ \ \ \ \ |u(p) - P_{p_2}(p)| \leq K_4 t^{1+\alpha}.
\end{equation}
Now for $\ell=1, 2$  we note that  $v_\ell= u- P_{p_\ell}$ solves
\begin{equation}\label{new2}
\sum_{i,j=1}^m X_{i}^\star( a_{ij} X_j v_\ell) = \sum_{i=1}^m X_{i}^\star  F_i^\ell + g,
\end{equation}
where we have let
\[
F_i^\ell \overset{def}{=}  f_i - \sum_{j=1}^m  a_{ij} X_j P_{ p_\ell}.
\] 
Moreover  from \eqref{3.4} we see that $v_\ell$ satisfies 
\begin{equation}\label{n100}
||v_\ell||_{L^{\infty}(B(p_3,a t))} \leq K_4 t^{1+\alpha}, \ \ \ \ \ell=1, 2.
\end{equation}
With \eqref{n100} in hands, we can now use the interior estimate \eqref{i13} in Corollary \ref{intr} in the pseudo-ball $B(p_3,a t)$ obtaining the following estimate for $\ell = 1, 2$
\begin{align}\label{3.44}
& |\nabla_\cH u(p_3) - \nabla_\cH P_{p_\ell}|  = |\nabla_\cH v_\ell(p_3)| 
\\
& \leq \frac{C}{t} \left(||v_\ell||_{L^{\infty}(B(p_3,at))} + t^{1+\alpha} \underset{i=1,...,m}{\sup} [ F_i^\ell]_{\Tau^{0, \alpha}(B(p_3,at))} + t^2 ||g||_{L^{\infty}(B(p_3,at))}\right) \leq  K_5 t^{\alpha}
\notag
\end{align}
for some $K_5$ universal.  Note that in the second inequality in \eqref{3.44} we have used \eqref{n100}. From \eqref{3.44} and the triangle inequality we obtain the desired estimate \eqref{bdcmp}.

\vskip 0.2in

\noindent \textbf{Step 4.} In this final step we prove that the horizontal gradient of a weak solution to \eqref{dp0} is in $\Gamma^{0,\alpha}$ up to the boundary. As a first step we observe that we can find $\ve>0$ sufficiently small such that for any $p\in \V_\ve$ there exists $\bar p \in \mathscr S_{1/2}$ for which
\begin{equation}\label{eqt}
d(p,\bar p) = d(p,\pa \Om).
\end{equation}
To finish the proof of the theorem we will show that for all  $p,p' \in \V_\ve$ we have   \begin{equation}\label{fop}
|\nabla_\cH u(p)- \nabla_\cH u(p')|\leq  C^\star d(p,p')^{\alpha},
\end{equation}
for some  universal constant $C^\star >0$. The sought for conclusion \eqref{ap} follows from \eqref{fop} by a standard covering argument. 

Given two points  $p, p' \in \V_\ve$ we denote by $\bar p, \bar p'$  the corresponding points in $\mathscr S_{1/2}$ for which \eqref{eqt} holds.
Henceforth, we use the notation $\delta(p) = d(p,\pa \Om)$ for  $p \in \Om$. 
Without loss of generality we assume that $\delta(p)= \min \{\delta(p),\delta(p')\}$. By \eqref{t100} in Step 2, given $\bar p$ as in \eqref{eqt} we know that there exists a first-order polynomial $P_{\bar{p}}$ such that for every $q \in \V_1$ one has
\begin{equation}\label{fopb}
|u(q) - P_{\bar{p}}(q)|\leq C_2d(\bar{p},q)^{1+\alpha}.
\end{equation}
There exists two possibilities:
\begin{itemize}
\item[(i)] $d(p, p') \leq \frac{\delta(p)}{2}$;
\item[(ii)] $d(p, p') > \frac{\delta(p)}{2}$.
\end{itemize}

\medskip

\noindent \textbf{Case (i)}:  We first observe that we have $B(p, \delta(p)) \subset \Omega$. Therefore, if we let $v=u-P_{\bar p}$, then similarly to Step 3 we see that $v$ satisfies an equation of the type \eqref{new2} in $B(p, \delta(p)) \subset \Omega$. It follows from \eqref{fopb} that the following estimate holds for some $\tilde C_2>0$
\begin{equation}\label{sup}
||v||_{L^{\infty}(B(p,\delta(p))} \leq \tilde{C}_2 \delta(p)^{1+\alpha}.
\end{equation}
Since $p' \in B(p, \delta(p)/2)$, using the interior regularity estimate \eqref{i133}  in Corollary \ref{intr} and \eqref{sup}, we conclude that for some $\tilde C$ depending also on $\tilde{C}_2$, 
\begin{align}\label{sup2}
& |\nabla_\cH v(p) - \nabla_\cH v(p')|
\\
& \leq \frac{\tilde C}{\delta(p)^{1+\alpha}} \left\{\delta(p)^{1+\alpha} + \delta(p)^{1+\alpha} \underset{i=1,...,m}{\sup} [F^{\bar p}_i]_{\Tau^{0, \alpha}} + \delta(p)^2 ||g||_{L^{\infty}}\right\} d(p,p')^{\alpha}
\notag\\
& \leq  \tilde{C_1} d(p, p')^{\alpha},
\notag
\end{align}
where $F_i^{\bar p} -  f_i - \sum_{j=1}^m  a_{ij} X_j P_{\bar p}$. 
Since $P_{\bar p} \in \mathcal{P}_1$, it follows from the definition of $v$ that  
\[
|\nabla_\cH u(p) - \nabla_\cH u(p')| = |\nabla_\cH v(p) - \nabla_\cH v(p')|.
\]
From this observation and \eqref{sup2} we conclude that \eqref{fop} holds. 

\medskip

\noindent \textbf{Case (ii)}: The hypothesis in this case and \eqref{eqt} imply 
\[
d(p,\bar{p}) = d(p,\pa\Om) = \delta(p) < 2 d(p, p').
\]
Combining this observation with \eqref{t} gives  
\begin{equation}\label{n101}
d(p', \bar{p}) \leq C_0(d(p',p) + d(p,\bar{p})) \leq  C_0 (d(p',p) + 2 d(p',p)) = 3 C_0 d(p, p').
\end{equation}

Since $d(p',\bar{p}) \geq d(p',\pa \Om) = \delta(p')$, we immediately find from \eqref{n101} 
\begin{equation}\label{n103}
\delta(p') \leq 3C_0 d(p,p').
\end{equation}
From \eqref{t}, \eqref{n101} and \eqref{n103} we finally have 
\begin{equation}\label{dist}
 d(\bar{p}, \bar{p}') \leq C_0 (d(\bar p, p') + d(p', \bar{p}')) = C_0 (d(\bar p,p') + \delta(p')) \leq 6 C_0^2 d(p, p').
\end{equation}
Let now $a$ be the universal constant $a$ in the existence of a non-tangential (pseudo)-ball in Step 3. From Step 2 we infer that the following holds
\begin{equation}\label{apt}
||u - P_{\bar p}||_{L^{\infty}(B(p, a\delta(p))} \leq  \tilde K_0 \delta(p)^{1+\alpha},\ \ \ \ \ ||u - P_{\bar{p}'}||_{L^{\infty}(B(p, a\delta(p'))} \leq  \tilde K_0 \delta(p')^{1+\alpha}.
\end{equation}
Denoting $v= u - P_{\bar{p}}$, we observe that $v$  satisfies an equation of the type \eqref{new2}. Therefore, arguing as in \eqref{3.4}-\eqref{3.44} and using in $B(p,a \delta(p))$ the former estimate in \eqref{apt}  as well as the interior estimate \eqref{i13} in Corollary \ref{intr}, we obtain for some universal constant $C>0$
\begin{equation}\label{imo2}
|\nabla_\cH  u(p) - \nabla_\cH P_{\bar{p}}|= |\nabla_\cH v(p)|  \leq C \delta(p)^{\alpha} \leq C d(p, p')^{\alpha}.
\end{equation}
Note that, since we are in Case (ii), in the last inequality in \eqref{imo2} we have used $\delta(p) \leq 2 d(p, p')$. 
Arguing similarly to \eqref{imo2} we find 
\begin{equation}\label{imo1}
|\nabla_\cH  u(p') - \nabla_\cH P_{\bar{p}'}| \leq C \delta(p')^{\alpha} \leq C d(p, p')^{\alpha}.
\end{equation}
where in the last inequality in \eqref{imo1} we have this time used \eqref{n103}. 
Now, from \eqref{bdcmp} and  \eqref{dist} we have
\begin{equation}\label{finali}
|\nabla_\cH P_{\bar p} - \nabla_\cH P_{\bar{p}'}| \leq K_0 d(\bar p, \bar{p}')^{\alpha} \le C d(p, p')^{\alpha}.
\end{equation}
Applying the triangle inequality with the estimates \eqref{imo2}, \eqref{imo1} and \eqref{finali} we finally conclude that \eqref{fop} holds.  This completes the proof of the theorem. 

\end{proof}

\end{document}